\documentclass[11pt]{article}

\usepackage[a4paper, margin=1in]{geometry}
\usepackage{setspace}
\usepackage{longtable}
\usepackage{parskip} 
\usepackage{multicol, upgreek}
\usepackage[OT1]{fontenc}
\usepackage{comment}
\usepackage{multicol, upgreek}

\usepackage{amsmath, amssymb, amsthm, mathtools, thmtools}
\usepackage{mathrsfs} 
\usepackage{bm}       
\usepackage{dsfont}   
\usepackage{enumitem} 

\theoremstyle{plain}
\newtheorem{theorem}{Theorem}[section]
\newtheorem{lemma}[theorem]{Lemma}
\newtheorem{corollary}[theorem]{Corollary}

\theoremstyle{definition}
\newtheorem{definition}[theorem]{Definition}
\newtheorem{example}[theorem]{Example}

\usepackage{graphicx}
\usepackage{float}
\usepackage{subcaption}
\usepackage{tikz}
\usepackage{pgfplots}
\pgfplotsset{compat=1.18}

\usepackage{hyperref} 
\usepackage{cleveref} 
\usepackage{xcolor}   
\usepackage{fancyhdr} 

\newcommand{\ra}{\rightarrow}

\newcommand{\Z}{\mathbb{Z}}

\title{A Classification of Small MSTD Sets in Arbitrary Fields}
\author{Yorick Herrmann \\
{\small\itshape Department of Mathematics, University of Arizona, Tucson, Arizona} \\
{\tt yherrmann@arizona.edu}}
\date{}
\begin{document}

\maketitle

\begin{abstract}
    A finite set $A$ in an additive abelian group is a More Sums Than Differences (MSTD) set if $|A+A| > |A-A|$. We prove that no MSTD set of size $5$ exists in an additive abelian group. Using a computer program inspired by Hegarty, we then obtain classifications of MSTD sets of sizes $6$, $7$, $8$, and $9$ in arbitrary fields. We also investigate the minimal cardinality of MSTD sets that are multiplicative subgroups of $\Z/p\Z$.
\end{abstract}

\section{Introduction}

For a finite set $A$ in an additive abelian group, the sumset $A+A$ and the difference set $A-A$ of $A$ are defined as \begin{align*}
    A+A &= \{a_1+a_2:a_1,a_2\in A\} \\
    A-A &= \{a_1-a_2:a_1,a_2\in A\}.
\end{align*}
Let $|A|$ denote the cardinality of $A$. We say that $A$ is a \textit{More Sums Than Differences} (MSTD) set if $|A+A|>|A-A|$, and we say that $A$ is \textit{sum-difference balanced} if $|A+A|=|A-A|$. The first MSTD sets in $\Z$ were given by Conway, who found the set $\{0,2,3,4,7,11,12,14\}$, and Marica \cite{Marica1969}, who gave the set $\{1,2,3,5,8,9,13,15,16\}$. Nathanson formalized the study of these sets, and provided several methods for constructing MSTD sets in $\Z$ \cite{Nathanson2007}. Hegarty then proved that no MSTD set of size 7 or less exists in $\Z$, and that up to affine transformation, Conway's set is the unique MSTD set of size 8 in $\Z$ \cite{Hegarty2007}. Further research has explored the density of MSTD sets in subsets of $\Z$ \cite{MartinOBryant2007}, explicit constructions of large families of MSTD sets \cite{MillerOroszScheinerman2010, Zhao2010b}, and restricted-sum-dominant sets \cite{PenmanWells2013}. More recent research on MSTD sets in $\Z$ includes \cite{Chu2020, CutlerPebodySarkar2024, HerrmannHillPhillipsFloresMillerSenger2024b, HerrmannHillPhillipsFloresMillerSenger2024a, KumarMohanPandey2024}.

MSTD sets have also been studied, though to a lesser extent, in the setting of
finite abelian groups. Penman and Wells
classified which finite abelian groups contain MSTD sets \cite{PenmanWells2014}, and Zhao determined the asymptotic number of
MSTD sets in finite abelian groups \cite{Zhao2010a}. Miller and
Vissuet showed that as $|G|\to\infty$, almost every uniformly random subset of a finite group is sum-difference balanced \cite{MillerVissuet2014}. 

\subsection*{Main Results}
Our goal in this paper is to prove the following six statements:
\begin{theorem}\label{Size5Thm}
    No MSTD set of size five exists in an additive abelian group.
\end{theorem}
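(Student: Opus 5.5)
We argue by contradiction. Suppose $A=\{a_1,\dots,a_5\}$ is an MSTD set in an abelian group $G$, and write $s=|A+A|$ and $d=|A-A|$. By translating we may assume $0\in A$, and we may replace $G$ by the subgroup generated by $A$. The plan is to compare the \emph{coincidences} among sums with those among differences, rather than the raw cardinalities.

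\textbf{Setup.} There are $15$ formal sums $a_i+a_j$ with $i\le j$, and $21$ formal differences: the $20$ ordered differences $a_i-a_j$ with $i\ne j$, plus $0$. Hence
\[
s=15-\sigma, \qquad d=21-\delta,
\]
where $\sigma$ (resp.\ $\delta$) is the sum over the fibres of the sum map (resp.\ the nonzero-difference map) of (fibre size $-1$). It therefore suffices to show $\delta\le \sigma+6$.

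\textbf{Dictionary between coincidences.} The basic observation is
\[
x_1-y_1=x_2-y_2 \iff x_1+y_2=x_2+y_1 .
\]
Consider a nontrivial coincidence of nonzero differences, so $(x_1,y_1)\neq(x_2,y_2)$ with $x_i\ne y_i$. Then $x_1\ne x_2$ and $y_1\ne y_2$, so it yields a nontrivial sum coincidence. This includes the $2$-torsion case $x_1=y_2$, $x_2=y_1$, which corresponds to $2x_1=2x_2$. Conversely, a single sum coincidence $a+b=c+d$ with $\{a,b\}\ne\{c,d\}$ yields the four difference equalities
\[
a-c=d-b,\quad c-a=b-d,\quad a-d=c-b,\quad d-a=b-c,
\]
which collapse to fewer when elements repeat (e.g.\ $2a=b+c$ gives only $a-b=c-a$ and its negative). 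So each sum collision ``costs'' at most about $2$ difference collisions. This alone is not enough, since a single sum fibre of size $m$ can generate many difference collisions. The argument must therefore exploit the fact that $n=5$ is small.

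\textbf{Casework on the maximal difference multiplicity.} Let $k$ be the largest multiplicity of a nonzero difference value $g$. Then $k\le 4$, since the pairs $(x_i,y_i)$ representing $g$ have distinct $x_i$ and distinct $y_i$. We split according to $k$:
\begin{itemize}[leftmargin=*]
\item[$k\ge 3$:] The representing pairs form ``arithmetic-progression-like'' chains in $A$, such as $x,x+g,x+2g$. We can then write down $A$ almost explicitly: e.g.\ $A\supseteq\{0,g,2g,3g\}$, or $A$ is a union of two translates of $\{0,g\}$ plus a point. From this we compute both $A+A$ and $A-A$ directly, splitting on whether $g$ has small order ($2,3,4,5$), where $A$ may be a coset of a subgroup or nearly so.
\item[$k\le 2$:] Every difference fibre has size at most $2$, so $\delta$ is at most the number of nonzero fibres of size $2$. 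Each such fibre $\{(x_1,y_1),(x_2,y_2)\}$ is paired with its negative fibre, and both come from the same sum equation $x_1+y_2=x_2+y_1$. Using the dictionary above, this gives an injection from pairs $\{\text{fibre},-\text{fibre}\}$ into sum collisions, with the $2$-torsion and $2a=b+c$ cases handled separately. The resulting bound $\delta\le 2\sigma$ must then be sharpened. The plan is to show that when $\sigma\ge 7$, the additive structure of $A$ is forced into the $k\ge3$ regime, so that $\delta\le\sigma+6$ holds in all remaining cases.
\end{itemize}

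\textbf{Main obstacle.} The hardest step is making the $k\le 2$ bound tight, because the sum-to-difference correspondence can lose a factor of $2$. My first attempt would be a charging argument. Each sum fibre of size $m\ge2$ yields at most $\binom{m}{2}$ sum equations, and these generate at most $m(m-1)$ ordered difference equalities. Moreover, distinct fibres share difference values only in constrained ways when $|A|=5$. If the charging argument does not close, I would fall back on a finite, computer-checkable enumeration in the spirit of Hegarty. There one enumerates all consistent ``coincidence patterns'' (the linear relations over $\Z$ among $a_1,\dots,a_5$ forced by the sum and difference fibres), realises each pattern in the universal group $\Z^5/\langle\text{relations}\rangle$, and verifies $d\ge s$ in every case.
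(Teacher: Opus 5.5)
The one quantitative step you commit to, $\delta\le 2\sigma$ in the regime $k\le 2$, is false, and so is the injection behind it. A single $C_4$ sum collision $a+b=c+d$ produces coincidences in the four fibres $a-c$, $c-a$, $a-d$, $d-a$; you list them yourself. These form two different pairs $\{F,-F\}$, and both pairs map to the same sum equation. So the map is $2$-to-$1$, and one $C_4$ collision can lower $|A-A|$ by $4$ while lowering $|A+A|$ by only $1$. Concretely, $A=\{0,1,3,4,10\}\subset\Z$ has every nonzero difference of multiplicity at most $2$, with $|A+A|=14$ and $|A-A|=17$, so $\sigma=1$ but $\delta=4$.

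The correct count for $k\le 2$ is $\delta\le 4\cdot\#\{\text{sum collisions}\}$, together with the trivial $\delta\le 10$ (there are only $20$ ordered nonzero pairs). This shows your ``main obstacle'' is misplaced. The target $\delta\le\sigma+6$ is automatic once $\sigma\ge 4$, so your planned treatment of $\sigma\ge 7$ is vacuous. Listing the possible sum fibres gives $\delta\le 8$ when $\sigma\le 2$. The only configuration in this regime that needs real work is $\delta=10$, $\sigma=3$: every nonzero difference has multiplicity exactly $2$, $|A-A|=11$ and $|A+A|=12$. The paper rules this out structurally in two steps:
\begin{enumerate}
\item A $C_4$ collision $a+b=c+d$ forces a multiplicity-$3$ difference unless, WLOG, $a-b=b$ and $c-d=d$; after that, one more forced collision drops $|A+A|$ to $11$.
\item If all collisions are $C_1$ or $C_2$, ten difference collisions need at least five sum collisions, hence a multiplicity-$4$ sum, which yields only $6$ or $9$ difference collisions.
\end{enumerate}
Nothing in your proposal addresses this case.

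The regime $k\ge 3$, where the paper spends almost all of its effort, is only asserted. Fixing one multiplicity-$3$ or $4$ difference still leaves one or two free elements. For example, the $3$-cycle gives $A=\{0,g,2g,x,y\}$ with $3g=0$ and $x,y$ arbitrary. In an arbitrary abelian group these free elements can create further sum and difference coincidences and torsion relations, so ``compute $A+A$ and $A-A$ directly'' hides exactly the case analysis that must be done. Also, $k\le 4$ is wrong: distinct first coordinates only give $k\le 5$, and $k=5$ occurs for $\{0,g,2g,3g,4g\}$ with $5g=0$. That case is harmless since the set is symmetric, but it must be listed.

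The paper organizes this regime as follows. Kneser's theorem gives $|A-A|\ge 8$. Differences of multiplicity at least $4$ are eliminated case by case. It then shows at most two multiplicity-$3$ differences can occur, and counting forces $|A-A|\ge 10$. A further check removes multiplicity-$3$ differences altogether, leaving only the all-multiplicity-$2$ endgame above.

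Your computer fallback is sound in principle. Write $e_1,\dots,e_5$ for the standard basis of $\Z^5$. The coincidences of $A$ in $G$ are exactly the vectors $e_i+e_j-e_k-e_l$ in the kernel of $\Z^5\to G$, so $\Z^5/\langle S\rangle$ reproduces the pattern $S$ exactly, torsion included. But until that enumeration or the casework is actually carried out, what you have is a plan, not a proof.
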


\begin{theorem}\label{Size6Field}
    No MSTD set of size six exists in an arbitrary field $F$. 
\end{theorem}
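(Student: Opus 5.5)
The plan is to reduce to the additive group of a field and then exploit the fact that such a group is a vector space over its prime field. Let $F$ have characteristic $p$, with $p=0$ allowed, and let $A\subseteq F$ with $|A|=6$.

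\emph{Reduction to a small group.} The subgroup $G=\langle A-A\rangle$ generated by differences is a finitely generated subgroup of $(F,+)$. Translating $A$ changes neither $|A+A|$ nor $|A-A|$, so we may assume $0\in A$ and $A\subseteq G$.
\begin{itemize}
\item If $p=0$, then $G\cong \Z^d$ with $d\le 5$.
\item If $p>0$, then $G\cong (\Z/p\Z)^d$ with $d\le 5$.
\end{itemize}

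\emph{Characteristic zero.} Here a Freiman isomorphism argument applies: project $\Z^d$ to $\Z$ by a generic linear map $x\mapsto \sum N^i x_i$ with $N$ large. This map is injective on $A+A$ and on $A-A$, so it preserves $|A+A|$ and $|A-A|$. By Hegarty's theorem \cite{Hegarty2007}, $\Z$ has no MSTD set of size at most $7$, so this case is done.

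\emph{Large characteristic.} The same projection works in characteristic $p$ once $p$ is large. Concretely, the set of linear relations $\sum \epsilon_i a_i=0$ with small coefficients satisfied by $A$ is the only data that determines the sizes of the sumset and difference set. For $p$ larger than an explicit bound, every such relation lifts to a relation among integer vectors, so $A$ is Freiman isomorphic of order $2$ to a subset of $\Z$. Again Hegarty's theorem applies.

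\emph{Small characteristic.} This leaves finitely many primes, each with $G\cong(\Z/p\Z)^d$ and $d\le 5$. Two subcases remain.
\begin{itemize}
\item \emph{Characteristic $2$.} Here $A+A=A-A$ as sets, so $A$ is never MSTD.
\item \emph{Odd small $p$.} Use a Hegarty-style computer search. Normalise $A$ by the affine group $\{x\mapsto \lambda x+b\}$ and by $GL_d(\mathbb F_p)$, taking $A$ to contain $0$ and the standard basis vectors $e_1,\dots,e_d$ together with $5-d$ further points. Then enumerate the possible additive coincidence patterns among the $21$ sums and $31$ differences.
\end{itemize}

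To make the enumeration finite and uniform, I would formalise it as follows. An MSTD set needs $|A+A|>|A-A|$. Since $|A-A|\ge |A+A|$ would follow if the sums collapse at least as much as the differences, we need many coincidences $a+b=c+d$ that are not of the trivial form. Each such coincidence is a linear relation $a+b-c-d=0$ among elements of $A$. Moreover, $a+b=c+d$ implies $a-c=d-b$, so every sum coincidence forces a difference coincidence. Because of this, sums can "beat" differences only through coincidences like $a-b=c-d$ that do not come from sum coincidences. Such a coincidence is of the form $a+d=b+c$ read differently, so the counting has to be done carefully.

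The computer program therefore enumerates the consistent systems of relations of the form $\pm a\pm b\pm c\pm d=0$ on six symbols. For each system it computes the rank over $\mathbb{Q}$ and over each $\mathbb F_p$, and determines which characteristics realise the system with the six elements distinct. It then checks $|A+A|\le|A-A|$ in every realisable case. The primes for which rank drops are exactly the finitely many small ones. These divide determinants of bounded $\{0,\pm1,\pm2\}$-matrices, so only primes up to a Hadamard-type bound need checking.

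\textbf{Main obstacle.} I expect the hardest step to be controlling this enumeration and proving that the list of relevant primes is complete. I would first attempt to reduce it using Theorem~\ref{Size5Thm}: delete an element $x$ of $A$, and note that $A\setminus\{x\}$ is not MSTD. This bounds how many new sums $x$ can contribute relative to new differences, namely at most $6$ new sums against at least $\ldots$ new differences, and should sharply restrict the patterns that need to be checked.
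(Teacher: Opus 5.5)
Your argument is correct in outline, but it takes a different route from the paper. The paper feeds the single symbolic configuration $\{0,a,b,c,d,e\}$ into its collision-forcing program with no characteristic assumed. That run re-derives the integer case itself and outputs $\phi=13$, the largest prime that could invalidate a GCD reduction along the way; reruns with $\beta=3,5,7,11,13$ then close the remaining characteristics.

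You instead separate the characteristics conceptually:
\begin{itemize}
\item characteristic $0$ via a Freiman isomorphism $\Z^d\to\Z$, using the integer theorem as a black box (Chu's computer-free proof suffices for size six);
\item large $p$ via lifting the coincidence pattern to $\Z$;
\item finitely many odd $p$ via a finite search in $(\Z/p\Z)^d$ with $d\le 5$.
\end{itemize}
Your route makes characteristic $0$ and all large $p$ computer-free, and its prime threshold is an a priori bound rather than a quantity the program must track correctly. The paper's route gives a far smaller exceptional set: five primes, against every odd prime below your bound. It is also a uniform tool that locates the MSTD sets of sizes $7$--$9$.

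There are two points to tighten. First, state the lifting step in the direction that actually needs $p$ large. Let $R$ be the set of relations $x_i+x_j-x_k-x_l=0$ holding in $A$. Suppose some form that fails in $A$ lies in the $\mathbb{Q}$-span of $R$. By Cramer's rule it then lies in the $\mathbb{F}_p$-span of $R$, hence vanishes on $A$, unless $p$ divides a nonzero maximal minor of $R$. Hadamard's inequality bounds such minors by $8^{5/2}<182$. So for $p>181$, a generic integer point of the rational solution space of $R$ has exactly the coincidences of $A$. That the relations in $R$ still hold is automatic; what needs $p$ large is that the non-relations survive. With this bound in hand, the completeness of the list of primes, which you name as the main obstacle, is already settled. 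The unfinished reduction via Theorem~\ref{Size5Thm} is therefore unnecessary, and the real remaining cost is running the finite search for each odd $p\le 181$.

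Second, your heuristic paragraph is off, though it is not load-bearing. Every nontrivial difference coincidence $a-b=c-d$ \emph{is} the nontrivial sum coincidence $a+d=b+c$. So there are no difference coincidences ``not coming from sum coincidences.'' The sum--difference imbalance comes from how collisions group into multiplicities; for example, one $C_4$ sum collision yields four difference collisions.
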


\begin{theorem}\label{Size 7}
    Up to affine transformation, the only MSTD sets of size seven in arbitrary fields $F$ are $\{0, a, a+2b, a+3b, b, 2b, 4b\}$ when $\operatorname{char}(F)=7$ and $a,b$ are linearly independent over $\Z/7\Z$, $\{0, 1,2,3,5,6,14\}$ when $\operatorname{char}(F)=17$, and $\{0, 1, 2, 3, 8, 10, 13\}$ when $\operatorname{char}(F)=19$. 
\end{theorem}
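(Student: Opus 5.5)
We will follow the strategy of Hegarty, adapted to fields: encode a $7$-element set $A=\{a_1,\dots,a_7\}$ by the pattern of coincidences among its sums and differences, and reduce the classification to a finite computation followed by a field-by-field algebraic check. First we normalize by an affine transformation $x\mapsto \lambda x+\mu$ ($\lambda\neq 0$), which preserves $|A+A|$ and $|A-A|$. This lets us assume $0\in A$ and, after also scaling, $1\in A$ (or some other convenient element).

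Next we record the counting constraints. Generically $|A+A|=28$ and $|A-A|=43$. For $A$ to be MSTD we need at least $16$ more ``difference coincidences'' than ``sum coincidences'', counted with appropriate multiplicity. Each coincidence is a linear relation among the $a_i$ with small integer coefficients: $a_i+a_j=a_k+a_l$ for sums, and $a_i-a_j=a_k-a_l$, i.e.\ $a_i+a_l=a_j+a_k$, for differences. A difference coincidence that is not a sum coincidence must be of the form $a_i-a_j=a_j-a_k$ (so $2a_j=a_i+a_k$) or a relation involving $2a_j$; other relations $a_i+a_l=a_j+a_k$ with distinct indices collapse sums too. Thus the MSTD condition forces many relations of the shape $2a_j=a_i+a_k$ or $a_i-a_j=a_k-a_l$ which do not translate into sum coincidences. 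The key step is to enumerate, by computer, all consistent systems of such linear relations over $\Z$. For each system, we compute the integer relation matrix $M$ whose rows are the relation vectors. We then find, via Smith normal form, the primes $p$ for which $M$ has a nontrivial kernel over $\mathbb{F}_p$ with distinct coordinates of larger dimension than over $\mathbb{Q}$.

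A set $A$ in a field $F$ spans a $\Z$- or $\mathbb{F}_p$-module depending on $\operatorname{char}(F)$. Hence the realizable coincidence patterns are exactly those in the kernel of $M$ over the prime field, tensored up to $F$. The sum/difference counts depend only on which relations hold, so the problem reduces to the prime field and to the rank of the $\mathbb{F}_p$- or $\mathbb{Q}$-span of $A$. By Hegarty's theorem there is nothing in characteristic $0$. For characteristic $p$, only finitely many primes appear as elementary divisors of the relevant matrices. Among them we expect exactly $7$, $17$, and $19$ to survive, with $A$ spanning a two-dimensional space in the case $p=7$ (parameters $a,b$) and a one-dimensional space for $p=17,19$.

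The final step is to verify the three listed sets directly, and to show uniqueness up to affine maps by checking that every surviving pattern is affinely equivalent to one of them. This includes handling automorphisms of the prime field, i.e.\ multiplication by units, by normalizing. We also need to argue that embedding into a larger field $F$ adds no new coincidences beyond those determined by the prime-field span. The main obstacle is the enumeration itself. The space of coincidence patterns is large, and we need pruning, as Hegarty did: for example, if a relation set already determines $A$ up to affine maps over $\mathbb{Q}$, with distinct elements, it can be discarded unless some prime makes the system degenerate. We must also be careful that characteristic-$p$ collapses such as $2a_j=a_i+a_k$ with $p=2$, or coincidences forced by torsion, are included in the enumeration rather than missed.
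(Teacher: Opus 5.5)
Your architecture matches the paper's: a Hegarty-style exhaustive search over collision patterns, finitely many exceptional characteristics handled separately, and multivariable configurations read off from the $\mathbb{F}_p$-span of $A$. The gap is in the one concrete mechanism you give for detecting characteristic-$p$ behavior. As specified, it cannot produce the sets in the theorem.

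You enumerate systems that are consistent over $\Z$ (realizable by distinct rationals), and you flag $p$ only when the kernel of $M$ grows over $\mathbb{F}_p$, i.e.\ when $p$ divides an invariant factor of $M$. A one-parameter configuration in characteristic $p$ has a relation system of $\mathbb{F}_p$-rank $5$. Any subsystem with the same $\mathbb{F}_p$-kernel that is consistent over $\mathbb{Q}$ therefore has $\mathbb{Q}$-rank at most $5$, hence exactly $5$. So the kernel never grows and $p$ is never flagged.

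Concretely, $\{-3,0,1,2,3,5,6\}$ is pinned down by the unitriangular relations $0+2=1+1$, $0+3=1+2$, $0+5=2+3$, $0+6=3+3$, $(-3)+3=0+0$. Its relation matrix thus has rank $5$ modulo every prime. Yet modulo $17$ it is $\{0,1,2,3,5,6,14\}$, which picks up wrap-around coincidences such as $14+14\equiv 5+6$ and is MSTD, with $|A+A|=16>15=|A-A|$. The new coincidences are relation vectors $v\notin\operatorname{row}_{\mathbb{Q}}(M)$ lying in $\operatorname{row}_{\mathbb{F}_p}(M)$, which your rank-drop test does not see. Adding them to $M$ gives a system with no distinct-coordinate solution over $\mathbb{Q}$, which your enumeration excludes.

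The characteristic-$7$ configuration has the same problem: it satisfies $4b+4b=b+0$, i.e.\ $7b=0$, which forces a coincidence over $\mathbb{Q}$. Your closing remark that torsion-forced coincidences must be ``included'' names this issue but gives no procedure. As written, the survivors $7,17,19$ are asserted rather than derived.

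To repair it, flag $p$ whenever $\operatorname{rank}_{\mathbb{F}_p}[M;v]\neq\operatorname{rank}_{\mathbb{Q}}[M;v]$ for some relation vector $v$ or some $e_i-e_j$. That means checking the invariant factors of all augmented matrices, not just of $M$. Then recompute the full $\mathbb{F}_p$-saturated pattern of a generic kernel point. Equivalently, enumerate systems without requiring solvability over $\mathbb{Q}$.

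The paper achieves this differently. Every one-parameter (integer) configuration is tested directly in $\Z/p\Z$ for all primes $p\le 2(\max A-\min A)$, which is where the $17$ and $19$ sets come from. For every prime up to $\phi=31$, the whole search is rerun with coefficients reduced mod $p$ (the $\beta$ runs), which is where the two-parameter characteristic-$7$ configuration comes from.

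Separately, your heuristic that some difference coincidences ``do not translate into sum coincidences'' is false. Any $a_i-a_j=a_k-a_l$ with $(i,j)\neq(k,l)$ yields the genuine sum collision $a_i+a_l=a_j+a_k$, and $2a_j=a_i+a_k$ is itself one. The MSTD surplus comes from a single lost sum accounting for two or four lost differences.
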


\begin{theorem}\label{Size8}
    The following list gives every MSTD configuration of size eight in arbitrary fields $F$, up to affine transformation: \begin{itemize}
        \item Conway's set $(\{0,2,3,4,7,11,12,14\})$ when $\operatorname{char}(F)=0$ or $\operatorname{char}(F)=p$ where $p$ is prime and $p \ge 29$.
        \item $\operatorname{char}(F)=3$: $\{0, a, 2a+b+c, 2a+2b+c, 2a+2b+2c, b, 2b+c, c\}$, when $a,b,$ and $c$ are linearly independent over $\Z/3\Z$.
        \item $\operatorname{char}(F)=5$: $\{0, a, a+b, 3a+4b, 4a+3b, 4a+4b, b, 3b\}$, when $a$ and $b$ are linearly independent over $\Z/5\Z$. 
        \item $\operatorname{char}(F)=19: \{0, 1, 2, 3, 4, 6, 7, 16\}, \{0, 1, 2, 3, 4, 7, 8, 15\}$
        \item $\operatorname{char}(F)=23:\{0, 1, 2, 3, 4, 7, 8, 19\}, \{0, 1, 2, 3, 4, 10, 12, 16\}, \{0, 1, 2, 3, 5, 6, 11, 15\}, \{0, 1, 2, 3, 5, 6, 12, 19\}, \\ \{0, 1, 2, 3, 6, 8, 18, 19\}, \{0, 1, 2, 3, 9, 10, 12, 15\}, \{0, 1, 2, 4, 5, 7, 14, 18\}, \{0, 1, 2, 4, 10, 18, 19, 20\}$
        \item $\operatorname{char}(F)=29: \{0, 1, 2, 3, 4, 12, 16, 20\}, \{0, 1, 2, 5, 18, 22, 23, 25\}$
       \item $\operatorname{char}(F)=31: \{0, 1, 2, 3, 6, 7, 17, 27\}$ 
    \end{itemize}
\end{theorem}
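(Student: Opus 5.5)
Up to affine transformation we may assume $0\in A$; after a further dilation and translation, $A$ contains $0$ and $1$. Work in the additive group of $F$, which is a vector space over its prime field $\mathbb{F}_p$ (or over $\mathbb{Q}$ when $\operatorname{char}(F)=0$). The MSTD property depends only on which linear relations with small integer coefficients hold among the elements of $A$. Indeed, $|A+A|$ and $|A-A|$ are determined by which of the equations $a_1+a_2=a_3+a_4$ and $a_1-a_2=a_3-a_4$ hold, that is, by the \emph{configuration} of $A$. So I would follow Hegarty's method. Enumerate candidate configurations as partitions of the $36$ unordered pairs $\{a_i,a_j\}$ (sums, including $a_i=a_j$) into coincidence classes. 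Then require that the linear system they impose on $(a_1,\dots,a_8)$, with $a_1=0$, is consistent with the elements being distinct. Finally compare the resulting count of distinct sums with the count of distinct differences.

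The first reduction uses the earlier theorems. By Theorems~\ref{Size5Thm}, \ref{Size6Field} and \ref{Size 7}, we can prune the search: an MSTD set of size $8$ must have $|A+A|\ge|A-A|+1$. Removing any element $x$ can destroy at most $8$ sums, namely the sums $x+a$ that occur only once. It can destroy at most $14$ differences, namely $\pm(x-a)$. So each $7$-element subset is either MSTD (only the three configurations of Theorem~\ref{Size 7}) or has few "unique" sums through $x$. This gives strong local constraints that drive a depth-first search adding one element at a time. Following Hegarty, I would build $A$ incrementally and keep track of which sum coincidences are forced. At each node I would compute an upper bound on the final $|A+A|$ and a lower bound on $|A-A|$, and prune when the bound rules out MSTD.

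For each surviving coincidence pattern, the set of coincidences is a system of integer linear equations $M\mathbf{a}=0$. I would compute its Smith normal form over $\Z$. The primes $p$ for which the solution space over $\mathbb{F}_p$ is larger, or in which some required distinctness $a_i\ne a_j$ or non-coincidence $a_i+a_j\ne a_k+a_l$ fails, are exactly the primes dividing the relevant minors. This produces the finite lists of exceptional characteristics $3,5,19,23,29,31$. In characteristic $0$ and for $p\ge 29$ with one-dimensional solution space, the generic solution is Conway's set. This agrees with Hegarty's uniqueness theorem in $\Z$, which handles characteristic $0$. For large $p$, reduction mod $p$ preserves the configuration as long as $p$ exceeds the relevant determinants, which is why the bound $p\ge29$ appears. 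In characteristics $3$ and $5$, the solution space has dimension $3$ or $2$. There one checks that the stated sets, with $a,b,c$ independent, realize the configuration, and that any degeneration (a dependency among $a,b,c$) collapses elements or adds difference coincidences.

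The main obstacle is completeness of the exhaustive search. The number of coincidence patterns is enormous, and it is delicate to argue that the pruning never discards a valid configuration in some characteristic. This matters especially for small $p$, where extra relations such as $2x=0$ in characteristic $2$ or $3x=0$ in characteristic $3$ create coincidences absent over $\Z$. I would handle small characteristics $p\le 31$ directly, by enumerating $8$-subsets of $\mathbb{F}_p^k$ for $k\le 7$ up to affine equivalence. For $p$ large, I would argue via the Smith normal form that every configuration is realized over $\Z$ or only over primes dividing bounded determinants, and cite Hegarty for the integer case.
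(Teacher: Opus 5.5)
Your overall strategy matches the paper's: a Hegarty-style search over coincidence patterns, exceptional characteristics read off from coefficients that fail to be invertible, and separate treatment of small characteristics. But the step that turns this into a proof is exactly the one you flag as ``the main obstacle'' and leave open. Nothing in the proposal makes the search both feasible and certifiably complete, and the resulting list of primes $3,5,19,23,29,31$ is asserted rather than derived. The missing ingredient is a structural dichotomy proved by Hegarty's counting. Write $\sigma=|A+A|$, $\delta$ for the number of nonzero differences, and $D$ for the number of difference collisions. Take $\operatorname{char}(F)\ne 2,3,5$, and suppose $A$ has neither a multiplicity-$3$ sum $a+b=c+d=e$ on six distinct elements nor a multiplicity-$3$ difference. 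Then every multiplicity-$3$ sum has, after translation, the shape $a+b=c+d=2\cdot 0$. Eliminated sums account on average for at most four difference collisions each, so $\sigma\le 36-D/4$ with $D\ge 56-\delta$. Since $\delta$ is even and $\sigma\ge\delta+2$, this gives successively $\delta\le 34$, $\delta\le 28$, $\delta\le 26$. But the $56$ ordered nonzero differences cannot be covered by $26$ values of multiplicity at most $2$. Hence $A$ contains one of: $a+b=c+d=e$, a $3$-chain, a $2$-chain plus a loose difference, or three loose differences. Each of these pins two of the seven free variables, and only these five-variable configurations are searched. Your substitute, pruning via the size-seven classification, does not play this role. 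It is stated backwards: if $A\setminus\{x\}$ is not MSTD, then $x$ must create strictly more new sums than new differences, hence at most $6$ new differences. So it is the unique \emph{differences} through $x$ that must be few. It also applies only once the whole set is known, and you give no argument that it cuts the search down to something that can actually be run.

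The small-characteristic step has the same defect. Enumerating $8$-subsets of $\mathbb{F}_p^k$, $k\le 7$, up to affine equivalence still leaves about $p^{k(7-k)}$ candidates after normalizing a basis. That is roughly $8\cdot 10^{17}$ for $p=31$ and $k=3,4$, so it is realistic for $p=3,5$ but not for most primes up to $31$. The paper instead reruns each five-variable configuration with the characteristic-$p$ option $\beta=p$ for every $p$ up to the tracked bound $\phi$, which reaches $47$, beyond your cutoff. This shows that no multivariable MSTD configuration exists for $p\ge 7$. Only then is brute force needed, and only over $8$-subsets of $\Z/p\Z$ containing $0$ for $7\le p\le 31$. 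Characteristics $3$ and $5$ are settled by running the full seven-variable base configuration with $\beta=3,5$.

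Your lifting argument for large $p$ (Smith normal form plus Hegarty's uniqueness in $\Z$) is sound in principle. However, it produces an explicit threshold only after every coincidence pattern has been enumerated, which is the same infeasible search. Without it you cannot certify that nothing beyond Conway's set occurs for $p>31$, or that characteristics $7,11,13,17$ contribute nothing. Also, the $29$ in the statement just reflects $p>2\cdot 14$, so that Conway's set has no wrap-around; it is not a determinant bound.
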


\begin{theorem}\label{Size9}
    For all primes $p >47$, every MSTD set in $\Z/p\Z$ of size nine is affinely equivalent to one of the nine integer MSTD sets of size nine. The only MSTD sets of size nine in $\Z/p\Z$ that are not affinely equivalent to an integer MSTD set $A$ where $\max(A)-\min(A)<p/2$ occur when $p$ is between $23$ and $47$, inclusive. There are no MSTD sets of size nine in fields of characteristic $13$, $17$, or $19$. The following list gives every multivariable MSTD configuration of size nine, up to affine transformation, when the variables are linearly independent over $\Z/p\Z$: \begin{itemize}
        \item char$(F)=3:$ \begin{enumerate}
            \item $\{0, a, a+b, a+2b+c, a+2b+2c, 2a+2b+c, b, b+c, c\}$
            \item $\{0, a, a+2b+2c, a+c, 2a+b+2c, b, b+c, c, 2c\}$
            \item $\{0, a, a+b, a+c, 2a+2b, 2a+2b+2c, 2a+2c, b, c \}$
        \end{enumerate}
        \item char$(F)=5:$  \begin{enumerate}
            \item $\{0, a, a+b, a+2b, 2a+b, 3a+2b, 4a+b, 4a+4b, b\}$
            \item $\{0, a, 2a, 2a+b, 2a+3b, 3a+b, 3a+4b, b, 2b\}$ 
            \item $\{0, a, a+3b, 3a+2b, 4a, 4a+3b, b, 2b, 4b\}$ 
            \item $\{0, a, a+4b, 3a+b, 4a, 4a+3b, 4a+4b, b, 3b\}$ 
            \item $\{0, a, a+2b, a+4b, 3a+3b, 4a+b, 4a+3b, b, 2b\}$
        \end{enumerate}
        \item char$(F)=7: \{0, a, a+5b, 2a+4b, 2a+5b, 2a+6b, 3a+6b, 6a+6b, b\}$
        \item char$(F)=11: \{0, a, 2a+9b, 3a+8b, 3a+9b, 4a+7b, 5a+7b, 9a+2b, b\}$
    \end{itemize} 
\end{theorem}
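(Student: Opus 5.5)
We prove Theorem \ref{Size9} by reducing to finitely many ``configurations'' and then running a Hegarty-style computer search. The structure we use is the affine span of $A$ in $F$ viewed over its prime field $\mathbb{F}_p$ (or $\mathbb{Q}$). After an affine normalization $A=\{0,a_1,\dots,a_8\}$, the sets $A+A$ and $A-A$ depend only on the \emph{linear relations} among $a_1,\dots,a_8$ over the prime field. So we want to classify, for each characteristic $p$, the lattice/subspace $\Lambda\subseteq(\text{prime field})^8$ of relations. Concretely, we encode $A$ as the image of the standard basis of a free module under a linear map. An MSTD configuration is then a linear map $\varphi$ such that the number of distinct images of the $45$ sums $e_i+e_j$ exceeds the number of distinct images of the $73$ differences. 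Each coincidence $a_i+a_j=a_k+a_l$ or $a_i-a_j=a_k-a_l$ is a linear equation, and whether two formal sums or differences collide is determined by whether their difference lies in $\ker\varphi$.

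\textbf{Step 1 (characteristic $0$ and large $p$).} In characteristic $0$, a Freiman-isomorphism argument lets us project any finite configuration into $\mathbb{Z}$ while preserving all order-$2$ coincidences. Hegarty's classification of the nine size-nine integer MSTD sets then applies. For $\mathbb{Z}/p\mathbb{Z}$ we use a rectification step. Suppose $A$ is MSTD but not Freiman $2$-isomorphic to an integer set. Then some relation among the $a_i$ with small integer coefficients holds mod $p$ but not over $\mathbb{Z}$. Bounding the coefficients of the relations forced by the required collisions gives $p\le 47$, and a direct search handles $p\le 47$. This bound is the weakest point of the argument. I would get it as Hegarty does: every MSTD set of size $9$ has a spanning set of at most $8$ independent coincidences, which are equations with entries in $\{0,\pm1,\pm2\}$. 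Cramer's rule then bounds the determinants arising from any inconsistent subsystem by Hadamard-type estimates. If that bound is too large, we instead enumerate all $p$ up to it by computer.

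\textbf{Step 2 (small characteristic, multivariable configurations).} For $p\le 47$ we run the search. We choose a maximal set of independent ``elements'' $a,b,c,\dots$ (the dimension $d$ of the affine span over $\mathbb{F}_p$) and build $A$ element by element in a canonical form: $0$ first, then $a$, then new basis vectors or $\mathbb{F}_p$-combinations of earlier ones. At each stage we prune with a partial count. The quantity $|A-A|$ can only grow, and $|A+A|$ is bounded above by the current sums plus $\binom{k+1}{2}$-type contributions from the remaining elements. Sets with $d$ large have few coincidences and therefore are not MSTD, because $A-A$ is symmetric and produces about twice as many new elements as $A+A$. This caps $d$ at $3$, matching the lists. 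We quotient by $\mathrm{AGL}$ over the prime field, and by field automorphisms where relevant, to remove affine duplicates. We then read off the stated lists, including emptiness for $p=13,17,19$.

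\textbf{Main obstacle.} The hard part is justifying completeness of the search. Two claims need proof. First, the canonical-form enumeration must really hit every orbit under affine maps. Second, the pruning bound must be valid in every field. The statement also requires, for $p>47$, equivalence to an integer set with $\max(A)-\min(A)<p/2$. For this I would show that the MSTD coincidence pattern forces a Freiman isomorphism of order $2$ onto an interval of length less than $p/2$. For $p>47$ this follows from the bound in Step 1, since all nine integer sets have diameter at most $23$.
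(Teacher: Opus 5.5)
Your outline is a reasonable framework: encode $A$ by its $\mathbb{F}_p$-linear relations, lift coincidence patterns to $\mathbb{Q}$ when $p$ avoids the relevant minors, use a Freiman isomorphism and the integer classification in characteristic $0$, and search directly for small $p$. The gap is that the two quantitative facts that carry the theorem are asserted rather than derived, and your fallbacks do not work.

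First, the bound $p\le 47$. In your set-up the exceptional primes are those dividing a nonzero minor of a matrix whose rows are coincidence forms $a_i+a_j-a_k-a_l$ or $a_i+a_j-2a_k$, with row norm at most $\sqrt6$. Hadamard's inequality then only gives $p\le 6^4=1296$. Nothing in the proposal brings this down to $47$. The fallback ``enumerate all $p$ up to it by computer'' is not feasible: for $p$ near $1291$ there are on the order of $\binom{p-1}{8}/(p-1)\approx 10^{17}$ dilation classes of $9$-subsets of $\Z/p\Z$ containing $0$. You would also need the multivariable search at every such prime, whereas your Step 2 runs it only for $p\le 47$, which presupposes the very bound in question.

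Second, the cap $d\le 3$ in Step 2 rests on a heuristic (``$A-A$ produces about twice as many new elements''). It is not a lemma, and in the paper it is an \emph{output} of the search, not an input. Without it, element-by-element enumeration in $\mathbb{F}_p^d$ is far out of reach already for $d=2$, $p=47$. Your monotonicity/$\binom{k+1}{2}$ pruning only becomes effective once seven or eight of the nine elements are placed, so it does not rescue this. You also have no mechanism for $d$ up to $8$, which arbitrary fields of characteristic $p$ require.

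The missing idea is the paper's structural reduction. It uses Hegarty's counting: each eliminated sum accounts for at most four difference collisions, so $|A+A|\le 45-D/4$, sharpened by tracking $C_2$ collisions. From this the paper shows an MSTD $9$-set must contain one of a few rigid patterns:
\begin{itemize}
\item a multiplicity-3 sum $a+b=c+d=e$;
\item a sum $a+b=c+d=2\cdot 0$ with one of its $C_2$ difference collisions inside a multiplicity-3 difference;
\item sixteen multiplicity-3 differences made of loose differences, refined by a pigeonhole argument;
\item one of five shapes of multiplicity-4 difference.
\end{itemize}
Each pattern imposes enough linear relations to leave at most six free variables. That is what lets the symbolic collision-forcing program terminate. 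The program works over $\Z$, records the largest prime $\phi$ at which a GCD reduction might be illegitimate, and reruns every prime up to $\phi$ with $\beta=p$. The number $47$ is simply the largest $\gamma$ that this exhaustive search produces. Only after that do the brute-force check of $\Z/p\Z$ for $p\le 47$ and the $\beta=3$ run on the full eight-variable configuration close the argument.

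Two smaller points. Completeness of the nine integer sets is due to Penman--Wells; Hegarty found them assuming a multiplicity-4 sum. Also, $\{0,4,6,8,11,14,19,21,25\}$ has diameter $25$, not at most $23$. This is harmless, since $p>47$ means $p\ge 53>2\cdot 25$.
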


\begin{theorem}\label{SubgroupThm}
    The smallest $p$ for which a multiplicative subgroup $A$ is MSTD in $\Z/p\Z$ is $3221$ $(|A|=n=161)$. The smallest odd order $n$ for which a multiplicative subgroup of order $n$ is MSTD is $n=141$ $(p=4231)$.
\end{theorem}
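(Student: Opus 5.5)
The statement is a finite computational claim, so the proof will be an exhaustive search that is made feasible by exploiting the structure of multiplicative subgroups. We use that $\Z/p\Z$ is a field and $(\Z/p\Z)^\times$ is cyclic. For each divisor $n$ of $p-1$ there is a unique subgroup $A=\langle g^{(p-1)/n}\rangle$ of order $n$, where $g$ is a primitive root. So for each prime $p$ the candidates are indexed by the divisors of $p-1$, and the search is over pairs $(p,n)$ with $n\mid p-1$.

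The key reduction is multiplicative invariance. Since $aA=A$ for every $a\in A$, both $A+A$ and $A-A$ are unions of $A$-orbits under multiplication, with $0$ possibly added. Concretely:
\begin{itemize}
\item $A+A\setminus\{0\}$ is the union of the cosets $(1+a)A$ for $a\in A$ with $1+a\neq 0$, together with $0$ when $-1\in A$.
\item $A-A\setminus\{0\}$ is the union of the cosets $(1-a)A$ for $a\in A$ with $a\ne 1$, and $A-A$ always contains $0$.
\end{itemize}
Hence
\[
|A+A| = n\cdot\#\{(1+a)A : a\in A,\ a\neq -1\} + [\,-1\in A\,],
\]
\[
|A-A| = n\cdot\#\{(1-a)A : a\in A,\ a\ne 1\} + 1 .
\]
For each $a\in A$ we compute the coset index of $1\pm a$ by taking its discrete logarithm modulo $(p-1)/n$, and then count distinct indices. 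This costs $O(n)$ field operations per pair instead of $O(n^2)$.

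Two structural facts narrow the search further.
\begin{itemize}
\item \textbf{Even $n$.} Here $-1\in A$, so $A=-A$ and $A+A=A-A$. Such subgroups are never MSTD. Therefore the relevant parity is determined by $-1\notin A$, which means $(p-1)/n$ is even.
\item \textbf{Odd $n$.} Now $A\cap(-A)=\emptyset$. The sets $1+A$ and $1-A$ correspond under $a\mapsto -a$ composed with negation, so it is not automatic which side is larger. The count of sum cosets versus difference cosets must be compared directly.
\end{itemize}
Looking at the example $p=3221$, $n=161$, we have $(p-1)/n=20$, even, and $161$ odd. This is consistent: MSTD forces $-1\notin A$, so $n$ must be odd whenever $p\equiv 1\pmod 4$ would otherwise put $-1$ in a subgroup of even index-complement. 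I will simply impose the necessary condition $-1\notin A$, i.e. $n\,\big|\,(p-1)/2$ with $(p-1)/n$ even, and let the search handle the rest.

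The proof then runs in two parts.
\begin{enumerate}
\item \textbf{Smallest $p$.} For every prime $p<3221$ and every admissible $n$, verify with the coset count that $|A+A|\le|A-A|$. Then exhibit $p=3221$, $n=161$ as MSTD by reporting the two coset counts.
\item \textbf{Smallest odd $n$.} For each odd $n<141$ we must rule out every prime $p\equiv 1\pmod n$, and that is an infinite family. We handle large $p$ with an argument showing that no subgroup of fixed order $n$ can be MSTD once $p$ is large enough.
\end{enumerate}

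The large-$p$ argument works as follows. For fixed $n$ and $p\to\infty$, the elements $1\pm a$ for $a\in A$ generically lie in distinct cosets, so $|A+A|=n^2$ and $|A-A|=n(n-1)+1$ with $A+A$ larger. That shows generic behaviour is not the obstruction. A cleaner way to organise the argument is via the ordering of additive coincidences. A coincidence $(1+a)A=(1+b)A$, or $1-a\in(1-b)A$, is a polynomial relation among roots of unity of order $n$. Such a relation holds in $\Z/p\Z$ only if $p$ divides a certain nonzero resultant/norm, for example the norm from $\mathbb{Q}(\zeta_n)$ of $(1+\zeta^i)-(1+\zeta^j)\zeta^k$. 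For primes not dividing any of these finitely many norms, the additive structure of $A$ matches that of $\mu_n\subset\mathbb{C}$. In $\mathbb{C}$ we can compute $|A+A|$ and $|A-A|$ exactly: $\mu_n$ for odd $n$ has a sumset of size determined by the rotational symmetry. So it suffices to check, once, that the characteristic-$0$ model of $\mu_n$ is not MSTD for each odd $n<141$. After that, only finitely many exceptional primes per $n$ remain, namely the divisors of those norms, and each of them is checked by direct computation.

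\textbf{Main obstacle.} I expect the hardest step to be bounding and enumerating these exceptional primes. The norms are huge, of size roughly $2^{\varphi(n)}$, so factoring them is impractical. My first attempt would be to avoid factoring altogether: use the elementary bound that $p$ exceptional forces $p\le$ some explicit bound $B(n)$, then brute-force all $p\equiv1\pmod n$ below $B(n)$ with the $O(n)$ coset test. If $B(n)$ is too large, I would fall back to an explicit density-style argument controlling the number of coset collisions instead.
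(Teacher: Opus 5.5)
Your framework is the paper's. A coincidence in $A\pm A$ for the subgroup generated by $m$ is the vanishing at $\zeta\mapsto m$ of some element of $\Z[\zeta_n]$. So away from the prime factors of finitely many resultants $\operatorname{Res}(\Phi_n,g)$ the subgroup behaves like $\mu_n\subset\mathbb{C}$, and the exceptional primes are tested directly. (The paper only needs sum collisions $1+m^j=m^k+m^l$, since a Sidon set is never MSTD; your difference-coset norms are redundant but harmless.) However, the step that makes this reduction work is wrong as you state it. The cosets $(1+c)A$ are never all distinct: $1+c^{-1}=c^{-1}(1+c)$ with $c^{-1}\in A$, so $(1+c)A=(1+c^{-1})A$. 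Hence there are at most $(n+1)/2$ sum cosets and $|A+A|\le n(n+1)/2$; your value $n^2$ even exceeds this trivial bound. On the other side, $(1-c^{-1})A=-(1-c)A\neq(1-c)A$ since $-1\notin A$. So generically $|A+A|=n(n+1)/2<n^2-n+1=|A-A|$, the opposite of what you wrote. With your count, the characteristic-$0$ model would be MSTD, and your own reduction would then produce MSTD subgroups of every odd order $n$ for all large $p\equiv1\pmod n$. The fact you need, and never prove (``determined by the rotational symmetry'' is not an argument), is that $\mu_n$ is a Sidon set in $\mathbb{C}$ for odd $n$. Indeed, if $u+v=s$ with $|u|=|v|=1$ and $s\neq0$, then $\{u,v\}$ is determined by $s$, and $s=0$ would force $-1\in\mu_n$. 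Equivalently, every $\operatorname{Res}(\Phi_n,x^i+x^j-x^k-x^l)$ with $\{i,j\}\neq\{k,l\}$ is nonzero, which the paper checks by computer for $n\le141$.

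Second, your plan for enumerating the exceptional primes is infeasible. The only elementary bound on a prime dividing the norm $N(\alpha)$ is $|N(\alpha)|\le4^{\varphi(n)}$ (for $n=139$ that is $4^{138}$). Brute-forcing every $p\equiv1\pmod n$ below such a $B(n)$ is therefore hopeless, and the ``density-style'' fallback is not a concrete argument. The paper does exactly what you set aside: it computes each resultant exactly and extracts its prime factors $p$ with $n\mid p-1$. Before doing so, it shrinks the list of collisions by normalizing to $i=0$ and identifying collisions that differ by multiplication by a power of $m$. Each surviving $p$ is then tested directly. (A minor slip: $-1\notin A$ is equivalent to $n$ odd, not to $(p-1)/n$ being even, e.g.\ $n=2$, $p=13$. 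Your filter is only over-inclusive, so nothing is lost.)
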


\subsection*{Background and Terminology}
We must first establish two basic facts about MSTD sets. Let $A=\{a_1,a_2,\dots a_n\}$ be a finite set in an additive abelian group $G$. If $x \in G$, then $B=A+x=\{a+x:a \in A\}$ satisfies  $|B+B|=|A+A|$ and $|B-B|=|A-A|$. Therefore, if $A$ is MSTD, then any shifted copy $B$ is MSTD as well \cite{Nathanson2007}. Next, for $x\in G$, define $x- A= \{x-a:a \in A\}$. If there exists $x \in G$ such that $x-A=A$, then we say that \textit{$A$ is symmetric with respect to $x$}, and we have that $A$ is sum-difference balanced. This is because $|A+A|=|A+(x-A)|=|x+(A-A)|=|A-A|$ \cite{Nathanson2007}.

We now define some terminology that will be used in our proofs. 

\begin{definition}
    A \textit{sum collision} occurs when there exists $a_1,a_2,a_3,a_4\in A$ such that $a_1+a_2=a_3+a_4$ and $\{a_1, a_2\}\ne \{a_3,a_4\}$.
\end{definition}

\begin{definition}
    A \textit{difference collision} occurs when there exist two pairs $(a_1, a_2), (a_3,a_4) \in A \times A$ such that $a_1-a_2=a_3-a_4\ne 0$, and $(a_1, a_2)\ne (a_3,a_4)$.
\end{definition}
By adding and subtracting terms, we see that every difference collision corresponds to no more than one sum collision, and that every sum collision creates at least one corresponding difference collision. However, the number of difference collisions created depends on the specific type of sum collision, of which there are three: 
\begin{definition}[Types of Sum Collisions]
    Let $a,b,c,d$ be distinct elements of a set $A$. A \textit{$C_1$ sum collision} contains two distinct elements and is of the form $2a=2b$. A \textit{$C_2$ sum collision} contains three distinct elements and is of the form $a+b=2c$. A \textit{$C_4$ sum collision} contains four distinct elements and is of the form $a+b=c+d$. 
\end{definition}
Similarly, there are three types of difference collisions:
\begin{definition}[Types of Difference Collision]
    Let $a,b,c,d$ be distinct elements of a set $A$. A \textit{$C_1$ difference collision} contains 2 distinct elements and is of the form $a-b=b-a$. A \textit{$C_2$ difference collision} contains 3 distinct elements and is of the form $a-c=c-b$. A \textit{$C_4$ difference collision} contains four distinct elements and is of the form $a-c=d-b$.
\end{definition}

A $C_1$ sum collision $2a=2b$ is possible only when the doubling endomorphism has nontrivial kernel, and it creates one $C_1$ difference collision, $a-b=b-a$. A $C_2$ sum collision $a+b=2c$ creates two $C_2$ difference collisions: $a-c=c-b$, and $c-a=b-c$. A $C_4$ sum collision $a+b=c+d$ creates four $C_4$ difference collisions: $a-c=d-b$, $c-a=b-d$, $a-d=c-b$, and $d-a= b-c$.

If $A=\{a_1,\dots ,a_n\}$ contains no difference collisions and $|A|=n$, then $|A-A|=n^2-n+1$ ($n^2-n$ nonzero differences). Similarly, if $A$ contains no sum collisions, then $|A+A|= n(n+1)/2$. In this case, $A$ is sometimes referred to as a Sidon set. Next, we define multiplicity-$k$ sums and differences: \begin{definition}
    Let $A=\{a_1,\dots, a_n\}$. A \textit{multiplicity-$k$ sum} is a sum $x$ for which there exists at least $k$ distinct pairs $(a_i, a_j) \in A\times A$ such that $a_i+a_j=x$ and $1 \le i \le j \le n$.
\end{definition} 

\begin{definition}
    A \textit{multiplicity-$k$ difference} is a difference $y \ne 0$ for which there exists at least $k$ many distinct pairs $(a_i, a_j) \in A\times A$ such that $a_i-a_j=y$.
\end{definition}

\begin{definition}
    If a sum/difference is multiplicity-$k$ but not multiplicity-$(k+1)$, then we say that sum/difference has \textit{exact multiplicity-$k$}.
\end{definition}

If a sum/difference has exact multiplicity-$k$, then that sum/difference creates $\binom{k}{2}$ many sum/difference collisions. Additionally, there are finitely many distinct forms in which a multiplicity-$k$ difference can occur. We establish the types of possible patterns present for a multiplicity$-k$ difference: \begin{definition}[Types of difference patterns]
    Let $a_1,\dots ,a_{n+1}$ be distinct elements of a set $A$. An \textit{$n-$cycle} is a difference of the form $a_1-a_2=a_2-a_3=a_3-a_4=\cdots = a_n-a_1$. An \textit{$n-$chain} is a difference of the form $a_1-a_2=a_2-a_3=a_3-a_4=\cdots=a_{n}-a_{n+1}$. A \textit{loose difference} is a nonzero difference pair that is not in an $n-$cycle or an $n-$chain.
\end{definition}
   
As an example, a difference of exact multiplicity-3 can be in the form of a 3-cycle, a 2-cycle and a loose difference, a 2-chain and a loose difference, a 3-chain, or 3 loose differences. Finally, we establish what it means for variables to be interchangeable.
\begin{definition}\label{def:interchangeable}
    Let $A=\{0,a_1,\dots ,a_{n-1}\}$ be a configuration of free variables and let $\mathcal{R}$ denote the set of sum collisions
    assumed to hold at a given stage of an
    argument. Two elements $x,y$ are \textit{interchangeable} if the
    permutation of the labels that swaps $x$ and $y$ and fixes every other
    element maps $\mathcal{R}$ to itself.
\end{definition}

Interchangeability will be a useful tool when examining potential difference collisions since if $a,b$ are interchangeable, then any difference collision through $a$ is essentially equivalent to the corresponding collision after swapping $a$ and $b$, so we need only consider one of them. Note that interchangeability is an equivalence relation, so if $x,y$ are interchangeable and $y,z$ are interchangeable, then $x,z$ are interchangeable, and we can say that $x,y,z$ are interchangeable. We provide an example of when variables are and are not interchangeable:

\begin{example}
    Let $A = \{0,a,b,c,d,e,f,g\}$, and assume $a+b=c+d$ is currently the only sum collision in $\mathcal{R}$. Then $a,b$ are interchangeable, $c,d$ are interchangeable, and $e,f,g,0$ are interchangeable. This example shows that 0 can be an interchangeable element even though it is not a variable. However, $a,e$ are not interchangeable, since this yields $e+b=c+d$, which is not in $\mathcal{R}$.
\end{example}
With these definitions in hand, we now describe the structure of this paper. 

\subsection*{Paper Outline}
Section 2 proves Theorem \ref{Size5Thm} by reaching a series of contradictions if we assume a MSTD set of five exists in an additive abelian group. Section 3 introduces a computer program, inspired by Hegarty's algorithmic approach in the integer
setting, to search for MSTD configurations in arbitrary fields. The program is used to prove Theorem \ref{Size6Field}, Theorem \ref{Size 7}, Theorem \ref{Size8}, and Theorem \ref{Size9}. Section 4 examines the cardinality of MSTD multiplicative subgroups in $\Z/p\Z$ for primes $p$ and proves Theorem \ref{SubgroupThm}.

\section{Smallest MSTD Set in an Arbitrary Group}

Although Hegarty proved that no MSTD set of size 7 or less exists in the integers, this result does not extend to finite abelian groups. For example, the sets $\{0,1,2,4,5,9\} \subset \Z/{12}\Z$ and $\{0,1,2,4,5,12\} \subset \Z/{15}\Z$ are both MSTD \cite{PenmanWells2014}. This motivates the question of what the smallest cardinality for an MSTD set in an arbitrary abelian group is. First, we recall the definition of a stabilizer:

\begin{definition}
    Let $A$ be a subset of an additive abelian group $G$. Then the stabilizer $S(A)$ of $A$ is defined as $$
    S(A) = \{g  \in G: g + A = A\}.
    $$
\end{definition}

We now recall the following result from standard group theory.

\begin{lemma}\label{StabSUBGROUP}
     Let $A$ be a subset of an additive abelian group $G$. Then the stabilizer $S(A)$ of $A$ is a subgroup of $G$. Therefore, when $G$ is finite, we have that $|S(A)|$ divides $|G|$. Additionally, since $A+S(A) = A$, we have that $A$ is a union of cosets of $S(A)$, so $|S(A)|$ divides $|A|$.
\end{lemma}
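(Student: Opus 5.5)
The statement to prove is Lemma \ref{StabSUBGROUP}. The plan is to check the subgroup axioms directly, then apply Lagrange's theorem, and finally use the orbit decomposition of $A$ under $S(A)$.

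\textbf{Subgroup.} First, $0 \in S(A)$ because $0+A=A$. For closure, let $g,h\in S(A)$. Then $(g+h)+A = g+(h+A) = g+A = A$, so $g+h\in S(A)$. For inverses, take $g\in S(A)$ and add $-g$ to both sides of $g+A=A$. This gives $A = -g+A$, so $-g\in S(A)$. Hence $S(A)$ is a subgroup. Note that no finiteness assumption is needed here. When $G$ is finite, Lagrange's theorem immediately gives $|S(A)|$ divides $|G|$.

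\textbf{Union of cosets.} Next I would show $A+S(A)=A$. If $a\in A$ and $s\in S(A)$, then $a+s\in s+A=A$, so $A+S(A)\subseteq A$. Conversely, $a=a+0$ with $0\in S(A)$, so $A\subseteq A+S(A)$. It follows that for each $a\in A$ the whole coset $a+S(A)$ lies in $A$. Therefore $A=\bigcup_{a\in A}(a+S(A))$. Distinct cosets of a subgroup are disjoint and all have cardinality $|S(A)|$. So when $A$ is finite, $|A|$ equals the number of distinct cosets contained in $A$ times $|S(A)|$, and hence $|S(A)|$ divides $|A|$.

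\textbf{Finiteness.} The only point needing care is the last divisibility claim when $G$ is infinite, since the lemma does not assume $G$ finite there. If $A$ is finite and nonempty, then $S(A)$ is finite: fixing $a_0\in A$, the map $g\mapsto g+a_0$ injects $S(A)$ into $A$. So the counting argument goes through for any finite nonempty $A$. This is the case relevant to the paper.
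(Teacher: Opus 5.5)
Your proof is correct and follows the route the paper indicates. The paper gives no separate proof: it cites the lemma as standard group theory and sketches exactly your argument in the statement itself (subgroup check, Lagrange, and $A+S(A)=A$ giving a disjoint union of cosets). Your extra remark that $g\mapsto g+a_0$ injects $S(A)$ into $A$ is a worthwhile addition. It shows $|S(A)|\mid |A|$ holds for finite nonempty $A$ even when $G$ is infinite, which the paper needs because it applies the lemma to $A-A$ in an arbitrary abelian group in the proof of Theorem~\ref{Size5Thm}.
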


We will also use Kneser's Theorem, which is given below.

\begin{theorem}[Kneser's Theorem \cite{Kneser1953}]\label{kNESER'S}
    Let $G$ be an additive abelian group, and let $A,B$ be finite non-empty subsets of $G$. Let $S(A+B)$ denote the stabilizer of $A+B$. Then $$
    |A+B| \ \ge \ |A+S(A+B)| + |B+S(A+B)| - |S(A+B)|.
    $$ 
\end{theorem}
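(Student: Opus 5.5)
Since this is Kneser's classical theorem, which the paper cites from \cite{Kneser1953} rather than proves, I would follow the standard inductive argument via the Dyson $e$-transform, in the style of Kemperman and DeVos. Write $H=S(A+B)$. Because $A+B$ is finite and nonempty, $H$ is a finite subgroup and $A+B$ is a union of $H$-cosets, so $A+B=(A+H)+(B+H)$. Passing to the quotient $\varphi:G\to G/H$, the claim becomes
\[
|\varphi(A)+\varphi(B)|\ \ge\ |\varphi(A)|+|\varphi(B)|-1,
\]
where $\varphi(A)+\varphi(B)$ has trivial stabilizer in $G/H$. So it suffices to prove the following. If $A,B$ are finite and nonempty and $|A+B|\le |A|+|B|-2$, then $A+B$ has a nontrivial stabilizer.

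I would prove the original inequality $|A+B|\ge |A+H|+|B+H|-|H|$ for all pairs simultaneously. Take a counterexample with $|B|$ minimal, and among those, with $|A|+|B|$ extremal. If $|B|=1$ the inequality is trivial, since $|A+B|=|A|$ and $A+B$ is a translate of $A$, so $A=A+H$. For $|B|\ge 2$, choose $e\in A-B$ and form
\[
A(e)=A\cup(B+e),\qquad B(e)=B\cap(A-e).
\]
These satisfy $A(e)+B(e)\subseteq A+B$, $|A(e)|+|B(e)|=|A|+|B|$, and $B(e)\neq\emptyset$.

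\textbf{Case 1:} $B(e)=B$ for every such $e$, that is, $B+e\subseteq A$ whenever $e\in A-B$. Then $A+(B-B)\subseteq A$, so $A$ is stabilized by the subgroup $\langle B-B\rangle$. Hence $A+B=A+b_0$ for any $b_0\in B$, and $H$ contains $B-B$. The inequality then reduces to $|A+B|=|A+H|\ge |A+H|+|B+H|-|H|$, which holds since $B+H$ is a single coset.

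\textbf{Case 2:} some $e$ gives $|B(e)|<|B|$. By minimality the inequality holds for $(A(e),B(e))$ with $K=S(A(e)+B(e))$:
\[
|A+B|\ \ge\ |A(e)+B(e)|\ \ge\ |A(e)+K|+|B(e)+K|-|K|.
\]

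The main obstacle is converting this bound for $K$ into one for $H$, because $K$ need not relate to $H$ in any obvious way. I would follow DeVos's trick. Consider $A'=A+K$ restricted appropriately, compare $A+B$ with $(A+B)\cup(A(e)+B(e)+K)$, and use inclusion--exclusion together with the fact that $A(e)+B(e)+K=A(e)+B(e)$. If $A+B+K=A+B$, then $K\le H$, and one checks the bound upgrades by counting $H/K$-cosets. Otherwise, apply the induction hypothesis to the pair $\bigl(A,\,B\cap(A(e)+K-\cdot)\bigr)$, or to the pair obtained by adding $K$ to $B(e)$, to derive a contradiction with the minimal choice. Arranging the extremal choice so that this second application is legal, that is, with strictly smaller $|B|$ or a larger $|A|$ under fixed $|A|+|B|$, is the delicate bookkeeping I expect to take the most care. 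Everything else is routine counting of cosets via Lemma~\ref{StabSUBGROUP}.
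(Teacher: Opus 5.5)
The paper gives no proof of this statement: Kneser's theorem is quoted from \cite{Kneser1953} and used as a black box in the proof of Theorem~\ref{Size5Thm}, so there is nothing in the paper to compare with. Judged on its own, your proposal has a genuine gap, and it sits exactly where the theorem is hard. The following parts are correct: the quotient reduction, the properties of the $e$-transform, the case $|B|=1$, and your Case~1. In Case~2 the induction hypothesis does give $|A+B|\ge|A(e)+B(e)|\ge|A(e)+K|+|B(e)+K|-|K|$ with $K=S(A(e)+B(e))$.

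When $A+B$ is not $K$-periodic, however, you give no argument. The pair $\bigl(A,B\cap(A(e)+K-\cdot)\bigr)$ is not a well-defined object. The pair $(A(e),B(e)+K)$ has sumset $A(e)+B(e)+K=A(e)+B(e)$ with the same stabilizer $K$, so applying the hypothesis to it returns only the inequality you already have. Your minimality need not even cover that pair, since $|B(e)+K|$ can exceed $|B|$. What has to be supplied is a lower bound on the part of $A+B$ lying outside the $K$-periodic set $A(e)+B(e)$. For instance, once $S(A+B)=\{0\}$, it would suffice to show
$|(A+B)\setminus(A(e)+B(e))|+\bigl(|A(e)+K|-|A(e)|\bigr)+\bigl(|B(e)+K|-|B(e)|\bigr)\ge |K|-1$.
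Proving something of this kind requires three things:
\begin{itemize}
\item an analysis of a $K$-coset that $A+B$ meets only partially;
\item a careful choice of $e$ or of the extremal counterexample;
\item a second application of the induction hypothesis to a genuinely smaller auxiliary pair, e.g.\ slices of $A$ and $B$ inside single $K$-cosets.
\end{itemize}
That is the actual content of the proofs of Kneser, Kemperman and DeVos. Calling it ``delicate bookkeeping'' leaves the theorem unproved.

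Your other subcase is also not justified as written. If $K\le H$ but $H\ne\{0\}$, the bound $|A(e)+K|+|B(e)+K|-|K|$ does not by itself give $|A+H|+|B+H|-|H|$. With $K=\{0\}$ it yields only $|A|+|B|-1$, which is weaker when $A$ and $B$ are far from $H$-periodic, and ``counting $H/K$-cosets'' does not bridge this. The fix is to use your opening reduction inside the induction. If $(A,B)$ is a counterexample with $|B|$ minimal, its image in $G/S(A+B)$ is a counterexample with trivial stabilizer and no larger $|B|$, so you may assume $S(A+B)=\{0\}$. Then $K\le H$ forces $K=\{0\}$, and the bound $|A|+|B|-1$ is immediate. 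Finally, the tie-breaker ``$|A|+|B|$ extremal'' does nothing for the $e$-transform, since the transform preserves $|A|+|B|$. You still have to specify which parameter decreases for the auxiliary pair in the hard subcase.
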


We now apply Kneser's Theorem to prove Theorem \ref{Size5Thm}.

\begin{proof}

Assume that $A$ is a MSTD set of size 5 in some arbitrary abelian group. Our proof follows this outline: \begin{enumerate}
    \item We use Kneser's Theorem to show $|A-A| \ge 8$ and $|A+A| \ge 9$.
    \item We show that no difference in $A-A$ occurs more than 3 times. The proof goes by contradiction and involves seven total cases.
    \item We show that there cannot be more than two multiplicity-3 differences. The proof goes by contradiction and involves three total cases.
    \item We use the fact that there cannot be more than two multiplicity-3 differences to show that $|A-A|\ge 10$ and $|A+A|\ge 11$. We then prove by contradiction that no multiplicity-3 difference exists, which forces $|A-A| \ge 11$ and $|A+A|\ge 12$.
    \item We prove that $|A-A|\ge 11$ leads to a contradiction. 
\end{enumerate}

We may assume that $A = \{0,a,b,c,d\}$ since the sizes of sumsets and difference sets are invariant under shifting. Since $A$ is MSTD, we know that $ |A+A|\le 15$, hence $5 \le |A-A| \le 14$.

\subsection*{Proving $|A-A| \ge 8$ and $|A+A|\ge 9$}

 If $|A-A|=5$, then Lemma \ref{StabSUBGROUP} and Kneser's Theorem force $|S(A-A)|=5$. Pick $g \in S(A-A)$. Since $0 \in A-A$, we know that $g \in A-A$. Therefore, $S(A-A) \subseteq A-A$, and since $|S(A-A)|=|A-A|=5$, we have that $S(A-A)=A-A$. Now, pick $a \in A$. Then, $A- a \subseteq A-A$, so $A \subseteq a + (A-A)$. Thus, $$A+A \subseteq 2a + (A-A)+(A-A)= 2a+(S(A-A)+(A-A)) = 2a+(A-A) $$ Hence, $|A+A| \le |2a+(A-A)|=|A-A|$, so $A$ is not MSTD. Since Lemma \ref{StabSUBGROUP} and Kneser's Theorem also force $|S(A-A)|=|A-A|$ when $|A-A|=6$ or $|A-A|=7$, we can follow the same logic to reach a contradiction in these cases. Thus, $|A-A|\ge 8$ and $|A+A|\ge 9$.

\subsection*{No Multiplicity-4 Difference Exists}

If a multiplicity-5 difference occurs, then WLOG, we have one of the following two cases: \begin{enumerate}
    \item  (5-cycle) $a-b=b-c=c-d=d-0=0-a$. Then $b=2a$, $c=3a$, $d=4a$, so $A=\{0,a,2a, 3a,4a\}$, which is symmetric with respect to $4a$ and hence $A$ is not MSTD.
    \item (2-cycle, 3-cycle) $a-b=b-a=c-d=d-0=0-c$. Then $d-c=d-0$, so $c=0$, a contradiction.
\end{enumerate}

If a multiplicity-4 difference exists, then WLOG, it must fall into one of the following cases:
\begin{enumerate}
    \item ($2$-cycle, $2$-chain) $a-b= b-a = c-0 = 0-d$. Then $c=-c=-d$, so $c=d$, a contradiction.
    \item ($4$-chain) $a-b = b-c = c-d= d-0$. Then $A = \{0,d,2d,3d, 4d\}$, so $A$ is symmetric with respect to $4d$.
    \item ($4$-cycle) $a-b=b-c=c-0=0-a$. Then we get the sum collisions $2b=a+c=0$, $a=b+c$, $2a=b=2c$, and $a+b=c$, so $|A+A| \le 9$. We also have $|A-A| \ge 9$ since $0, \pm (a-d), \pm (b-d), \pm (c-d)$, the 4-cycle and its negative are all distinct without assuming further collisions. Eliminating another difference forces another sum to be eliminated, so $|A+A| \le 8$ and $|A-A| \le 7$. 
    \item ($3$-cycle, $1$ loose difference) $a-b=b-0=0-a=c-d$. Then $a=2b$, $b=2a$, $a+b=0$, $a+d=b+c$, $b+d=c$, and $a+c=d$, so $|A+A| \le 9$. We also get the distinct differences $a-c=b-d$, $b-c=-d$, and $-c=a-d$ (and their negatives), so $|A-A|=9$ without further collisions. Eliminating another difference forces $|A+A| \le 8$ and $|A-A| \le 7$.
    \item (two $2$-cycles) $a-b = b-a= c-d = d-c$. This case is handled separately, below. 
\end{enumerate}

Assume a multiplicity-4 difference with two 2-cycles occurs ($a-b=b-a=c-d=d-c$), then if we assume no further collisions occur, we have the following 13 nonzero differences: \begin{itemize}
    \begin{multicols}{2}
        \item $a-b=b-a=c-d=d-c$
        \item $a-c=b-d$
        \item $c-a=d-b$
        \item $a-d=b-c$
        \item $d-a=c-b$
        \item $\pm a, \pm b, \pm c, \pm d$ 
    \end{multicols}
    \end{itemize}
    We also have the following 11 sums: \begin{itemize}
    \begin{multicols}{2}
        \item $2a = 2b$
        \item $2c=2d$
        \item $a+d=b+c$
        \item $a+c=b+d$
        \item $a+b,c+d$
        \item $a,b,c,d,0$
    \end{multicols}
    \end{itemize}

    Note that $a,b$ are interchangeable, and $c,d$ are interchangeable. If any of the differences currently of exact multiplicity-$2$ are in another collision, then due to interchangeability, it suffices to check the cases where $a-c=b-d$ is in another collision: \begin{itemize}
        \item If $a-c=b-d=c-a=d-b$, then we get $|A+A|=9$ while $|A-A|=12$, and another forced collision gives $|A+A|\le 8$.
        \item If $a-c=b-d=d-a=c-b$, then $|A+A|=9$ while $|A-A|=12$. 
        \item If $a-c=b-d=-a$, then $|A+A|=9$ while $|A-A|=10$. We get the same contradiction if $a-c=b-d$ is equal to $-b, c,$ or $d$.
    \end{itemize} 
    
    Next, we can have at most two difference collisions among $\{\pm a, \pm b, \pm c, \pm d\}$, and we now show that no such pair of possible difference collisions causes $A$ to be MSTD. Due to interchangeability, it suffices to consider the cases where $a$ and $c$ are in difference collisions: 
    \begin{itemize}
        \item If $a=-a$, then $b=-b$, so $|A+A|=10$ while $|A-A|=12$. If $c=-c$, then $a-c=c-a$. If $c=-d$, then $2a=c+d=0$, and $a-c=d-a$.
        \item If $a=-b$, then $|A+A|=10$ while $|A-A|=12$. If $c=-c$, then $a+b=2c=0$, so $a-c=c-b$. If $c=-d$, then $a+b=c+d=0$, so $a-c=d-b$.
        \item If $a=-c$, then $b=-d$, so $|A+A|=10$ while $|A-A|=10$. However, no further difference collision exists.
    \end{itemize}

    Since all cases are invalid, no multiplicity-4 difference occurs.

\subsection*{No More Than Two Multiplicity-3 Differences Exist}    

To prove this claim, we proceed by contradiction. There are three types of multiplicity-3 differences: \begin{enumerate}
    \item (3-chain) $a-b=b-c=c$
    \item (3-cycle) $a-b=b=-a$
    \item (2-chain, 1 loose difference) $a-b=b-c=d$
\end{enumerate}
Note that any multiplicity-3 difference containing a 2-cycle must also be a multiplicity-4 difference. 

For each case in this subsection, we will list out the current differences and sums given by each type of multiplicity-3 difference. We then show that if another multiplicity-3 difference appears, we must either have $|A+A| \le 8$ or a multiplicity-4 difference.

\textbf{Case 1:} If a 3-chain $a-b=b-c=c$ occurs, then we get the following 14 nonzero differences: \begin{itemize}\begin{multicols}{2} 
\item $a-b=b-c=c$
\item $b-a=c-b=-c$
\item $a-c=b$
\item $c-a=-b$
\item $ \pm a, \pm d$
\item $\pm (a-d), \pm (b-d), \pm (c-d)$
\end{multicols}
\end{itemize}

Additionally, $A+A$ contains the following 12 sums:
\begin{itemize}\begin{multicols}{2} 
\item $2b=a+c$
\item $a=b+c$
\item $b=2c$
\item $0,c,d$
\item $2a, 2d$
\item $a+b,a+d,b+d,c+d$
    \end{multicols}
\end{itemize}

We claim that if $a-c=b$ is in a difference collision with another nonzero difference, then we either get a multiplicity-4 difference or we eventually force $|A+A|\le 8$. This claim is verified in the appendix by checking through each potential difference collision involving $a-c=b$. For instance, if $a-c=b=d-b$, then $a-d=c-b$, so $c-b$, is a multiplicity-4 difference. 

Therefore, if a second multiplicity-3 difference occurs, then $a$ is a multiplicity-3 difference, since every other difference that has exact multiplicity-1 contains the variable $d$. We claim that when $a$ is a multiplicity-3 difference, then we either get a multiplicity-4 difference or we force $|A+A| \le 8$. This claim is verified in the appendix by checking through each potential multiplicity-3 difference involving $a$. For instance, if $a=-d=d-b$, then $b=d-a$, which gives that $a-c=b$ is a multiplicity-3 difference.

\textbf{Case 2:} If a 3-cycle $a-b=b=-a$ occurs, then we get the following 16 nonzero differences: \begin{itemize}\begin{multicols}{2} 
\item $a-b=b=-a$
\item $b-a=-b=a$
\item $\pm c, \pm d$
\item $\pm (a-c), \pm (b-c)$
\item $\pm (a-d), \pm (b-d)$
\item $\pm (d-c)$
\end{multicols}
\end{itemize}

Additionally, $A+A$ contains the following 12 sums:
\begin{itemize}\begin{multicols}{2} 
\item $2b=a$
\item $a+b=0$
\item $b=2a$
\item $2c, 2d$
\item $c, a+c,b+c, c+d$
\item $d,a+d, b+d$
\end{multicols}
\end{itemize}

We have that $a,b,0$ are interchangeable, and that $c,d$ are interchangeable. Therefore, if another multiplicity-3 difference occurs, then WLOG, we can assume that $c$ is in this difference. We now have the following possibilities: \begin{itemize}
    \item $c=-d=a-c$. Then $a=c-d$, creating a multiplicity-4 difference.
    \item $c=-d=d-a$. Then $a=d-c$.
    \item $c=-d=d-c$. Then $a+b=c+d=0$, $2d=c$, and $2c=d$, so $|A+A|=9$. We get the differences $-c=d=c-d$, $a-c=d-b$, $c-a=b-d$, $a-d=c-b$, and $d-a=b-c$, so $|A-A|=9$. 
\end{itemize}

\textbf{Case 3:} If a multiplicity-3 difference with a 2-chain and one loose difference occurs ($a-b=b-c=d$) then we get the following 12 nonzero differences:  
\begin{itemize}
    \begin{multicols}{2}
    \item $a-b = b-c = d$
    \item $b-a = c-b = -d$
    \item $a-d = b$
    \item $d-a = -b$
    \item $b-d=c$
    \item $d-b=-c$
    \item $\pm (a-c), \pm a$
    \item $\pm (c-d)$
    \end{multicols}
\end{itemize}
Additionally, $A+A$ contains the following 12 sums:
\begin{itemize}
    \begin{multicols}{2}
        \item $2b = a+c$
        \item $a=b+d$
        \item $b=c+d$ 
        \item $2a,2c, 2d$
        \item $a+b, a+d, b+c$
        \item $c,d,0$
    \end{multicols}
\end{itemize}

We now consider all new possible multiplicity-3 differences that do not fall in Case 1 or Case 2: \begin{itemize}
    \item $a-d=b=c-a$ gives $b-c=-a$, so $b-c$ is a multiplicity-4 difference.
    \item $a-d=b=d-c$ gives $b-d=-c$, so $b-d$ is a multiplicity-4 difference.
    \item $b-d=c=a-c$ gives $a-b=c-d$, so $a-b$ is a multiplicity-4 difference.
    \item $b-d=c=-a$ gives $a-d=-b$, so $a-d$ is a multiplicity-4 difference.
\end{itemize}

Every multiplicity-3 difference that can be formed between $\pm (a-c)$, $\pm (c-d)$, and $\pm a$ falls into either Case 1 or Case 2. 

Since every case results in a contradiction, no more than two multiplicity-3 differences occur.

\subsection*{Disproving the Remaining Cases}

 There are a total of 20 possible nonzero difference pairs in $A-A$, which are given by the set $D = \{(x,y): x, y \in A, x\ne y\}$. If $|A-A|=8$, then there are exactly 7 nonzero differences in $A-A$. Therefore, we have $|D| \le 5(2)+2(3)=16$, a contradiction. Similarly, if we assume $|A-A|=9$, then we have that $|D| \le 6(2)+2(3)=18$, a contradiction. 

Therefore, $|A-A|\ge 10$, and $|A+A| \ge 11$. If any multiplicity-3 difference occurs, then we get $|A+A|=12$ and $|A-A|\ge 13$. We claim that every possible difference collision now forces either a third multiplicity-3 difference, or $|A+A| \le 10$. This claim is verified in the appendix by checking through each potential difference collision after assuming a multiplicity-3 difference occurs. We fully check through each of the three types of multiplicity-3 differences. As an example, if a 3-chain $a-b=b-c=c$ occurs, then if $a=-d$, we get $|A+A|=11$ and $|A-A|=13$. 

Therefore, no multiplicity-3 difference can occur. If $|A-A|=10$, then there are nine nonzero differences, so $|D| \le 9(2)=18$, a contradiction. Hence, $|A-A| \ge 11$, so $|A+A| \ge 12$. If $|A-A|=11$, then there are ten multiplicity-2 differences. If WLOG, the $C_4$ sum collision $a+b=c+d$ exists, then at present, $A-A$ contains the following 16 nonzero differences: \begin{itemize}
    \begin{multicols}{2}
    \item $a-c=d-b$
    \item $c-a = b-d$
    \item $a-d = c-b$
    \item $d-a = b-c$
    \item $\pm (a-b), \pm (c-d)$
    \item $\pm a, \pm b, \pm c, \pm d$
     \end{multicols}
\end{itemize}

Additionally, $A+A$ contains the following 14 sums:
\begin{itemize}
    \begin{multicols}{2}
        \item $a+b=c+d$
        \item $a+c,a+d,b+c,b+d$
        \item $2a,2b,2c,2d$
        \item $a,b,c,d,0$
    \end{multicols}
\end{itemize}
We see that $a,b$ are interchangeable and $c,d$ are interchangeable. For $a-b$, we get a multiplicity-3 difference unless $a-b=b$, WLOG. Similarly, we have that $c-d=d$, WLOG. Then $|A+A|=12$ and $|A-A|=13$, but another difference collision forces $|A+A| \le 11$. 

Therefore, each multiplicity-2 difference corresponds to either a $C_1$ or a $C_2$ sum collision. As such, there are at least five sum collisions, but this can only happen if a multiplicity-4 sum occurs since $|A+A|\ge 12$. This means we must have a sum that is one of two forms: \begin{enumerate}
    \item $2a=2b=2c=2d$. Then we have six $C_1$ collisions, which gives a total of six difference collisions.
    \item $2a = 2b = 2c= d+0$. Then we have three $C_1$ collisions and three $C_2$ collisions, which gives a total of nine difference collisions.
\end{enumerate} 
Both forms lead to contradictions since there are 10 difference collisions. Hence, $|A-A|=11$ is impossible.

If $|A-A|=12$, then there are nine multiplicity-2 differences and two multiplicity-1 differences, so there are nine difference collisions. Therefore, at least three sum collisions occur, and since $|A+A| \ge 13$, a multiplicity-3 sum must occur. However, no multiplicity-3 sum produces two $C_4$ sum collisions and one $C_1$ sum collision, which is the only theoretical way a multiplicity-3 sum can yield nine difference collisions. Hence, $|A-A|=12$ is impossible. Therefore, $|A-A|\ge 13$, so we have no more than one sum collision, which forces $|A-A| \ge 16$, a contradiction. Since every value of $|A-A|$ is impossible, $A$ cannot be MSTD.
\end{proof}

\section{Small MSTD Sets in Fields}

We now consider MSTD sets in fields. We first establish two relevant definitions. \begin{definition}
    Let $A$ be a set in a field $F$. If $x \in F$ and $x\ne 0$, then $B=x \cdot A = \{x\cdot a: a\in A\}$ is a \textit{dilation} of $A$.
\end{definition}  Since every nonzero element of a field has a unique multiplicative inverse, we know that the operation $T_x: F \ra F$ defined as $T_x(a)=a\cdot x$ is a bijection when  $x\ne 0$. As such, $|B+B|=|A+A|$ and $|B-B|=|A-A|$ \cite{Nathanson2007}. Additionally, we have the following definition:
\begin{definition}
    A set $C$ is \textit{affinely equivalent} to a set $A$ in a field $F$ if there exists $x,y \in F$, $x \ne 0$ such that $x\cdot A+y=C$. Equivalently, $C$ is also referred to as an affine transformation of $A$.
\end{definition}

We introduce a computer program that produces the MSTD classifications given in this section, which was inspired by Hegarty's search for small MSTD sets in the integers. 

\subsection*{Hegarty's Idea}
In \cite{Hegarty2007}, Hegarty describes the steps his program takes to
classify MSTD sets in the integers, and our program adapts this strategy. The starting observation is simple: An MSTD set $A=\{0,a_1,\dots,a_{n-1}\}$ must contain difference collisions. Each new, independent difference collision in the integers removes a free variable, since it expresses one variable as a linear combination of the
others. For instance, if $a_1-a_2=a_3-a_4$, then $a_1=a_2+a_3-a_4$, and $A=\{0,a_2+a_3-a_4,a_2,a_3, \dots, a_{n-1}\}$. Hegarty's idea is to use these collisions to progressively restrict the structure of $A$: one forces a difference collision, eliminates a
variable, and checks whether the resulting configuration is MSTD; if it is
not, one forces a further collision and repeats. This process terminates since $A$ is finite, and when $A$ depends on only one free variable $a_i$, dividing each element
by $a_i$ reduces the problem to checking whether the resulting set is MSTD in $\Z$. Hegarty's program exhaustively checks through every possible combination of difference collisions. In this way every MSTD set of size $n$ (up to affine transformation) is eventually located.

Running this program directly is infeasible even for $n=8$. To account for this, Hegarty proves that any size-$8$ MSTD set must be in the form of at least one of $18$ configurations, each of which contains no more than five free variables. His program then examines each of these configurations in turn rather than searching from scratch. Before delving into the computational limits for our program, we first explain how our program differs from Hegarty's.

\subsection*{Program Modifications for Arbitrary Fields}
Working over arbitrary fields rather than $\Z$ introduces a few new considerations that our program must account for. First, an arbitrary field may not be an ordered set, so any two nonzero difference pairs can be in a collision as long as the collision does not force two distinct values of $A$ to be equal. This is opposed to $\Z$, where we can rule out any positive difference equaling a negative difference. Second, instead of just checking if a set $A$ is MSTD over $\Z$, we have to check if $A$ is MSTD in fields with prime characteristic $p$, for various $p$. Third, we need to account for the fact that a nonzero prime characteristic can lead to errors when progressively removing free variables.

Our program follows these steps: \begin{enumerate}
    \item We input the initial configuration of free variables. For instance, $A= \{0, a,b, a-b\}$.
    \item The program calculates all possible difference collisions that are not redundant (does not force $0=0$). For instance, $a-0=b-0$ is impossible because we assume $a$ and $b$ are distinct, and $(a-b)-a = 0-b$ is redundant because this gives $0=0$. However, $a-0=0-b$ is possible.
    \item The program picks a difference collision, and then eliminates a free variable in the configuration. For instance, if $a-0=0-b$, then $a=-b$, so $A=\{0, -b,b,-2b\}$. 
    \item If $A$ still has more than 1 free variable, we check if the configuration is MSTD, and then repeat Steps 2 and 3 for the new configuration. If $A$ has 1 free variable, we dilate $A$ by that variable's multiplicative inverse. For instance, $b^{-1}\cdot A=\{0, -1,1,-2\}$.
    \item Let $M=\max(A)$ and $m= \min(A)$. We check if $A$ is MSTD in $\Z$ and in $\Z/p\Z$ for all primes $p$ such that $|A|<p \le 2M-2m$. If $p >2M-2m$, then $A+A$ and $A-A$ have the same cardinality as subsets of $\Z$ that they do as subsets of $\Z/p\Z$. Also, note that $\Z$ is a proxy for an arbitrary characteristic 0 field (even though $\Z$ itself is not a field), since $|A+A|$ and $|A-A|$ are equivalent in $\Z$ and any characteristic 0 field. As an example for this step, if $A= \{0,-1,1,-2\}$, $2M-2m=6$, so we check if $A$ is MSTD in $\Z$ and $\Z/5\Z$.
    \item We repeat Steps 2-5 until all possible difference collision branches are exhausted.
\end{enumerate}


Of important note is how we deal with collisions that give linear relations such as $k_1a=k_2b$, where $k_1, k_2>1$ are integer scalars. If GCD$(k_1,k_2)=1$, and if $k_1 <k_2$, then we dilate $A$ by a factor of $k_1$ and substitute in $k_2b$ wherever $k_1a$ appears. Furthermore, our field $F$ cannot have characteristic dividing $k_1$, since then $k_2b=0$, but $k_2b \ne 0$ since $b\ne 0$ and GCD$(k_1,k_2)=1$. We get the same contradiction if our field $F$ has characteristic dividing $k_2$. 

If GCD$(k_1, k_2)>1$, then we multiply $k_1$ and $k_2$ by the multiplicative inverse of their GCD (if it exists), and then apply the relation we get. For instance, if our collision is $2a-4b=2b-a$, then $3a=6b$ is our linear relation, and we apply the relation $a=2b$. However, if the characteristic of $F$ divides the GCD, then the relation $k_1a =k_2b$ is redundant, and reducing the terms by their GCD could lead to an inaccurate collision. To account for this, the program tracks the maximal prime factor dividing the GCD of such collisions. Let $\phi$ denote this maximal prime factor. Note that $\phi$ also tracks $k_1$ in difference collisions of the form $k_1a=0$ (before marking it as invalid, since reducing by the `GCD' gives $a=0$, a contradiction). The program's output will return all MSTD sets (up to affine transformation) in the given configuration in fields with characteristic 0 or characteristic greater than $\phi$, but if $0<\operatorname{char}(F)\le \phi$, it is unclear how many MSTD sets of the given configuration exist or not. It is simple enough to check $\Z/p\Z$ for all primes $p $ such that $0 < p \le \phi$, but no such finite computation can be performed for arbitrarily large fields with the same characteristic $p$. 

To account for the region where $0< \operatorname{char}(F)\le \phi$, the program admits an optional parameter $\beta$. When $\beta=p$, the program treats $A$ as if it lives in a field of characteristic $p$. The primary change occurs in Step~2: after computing $A+A$ and $A-A$, the program reduces all coefficients modulo $p$, which prevents redundant collisions. For example, if $\operatorname{char}(F)=3$ and $A-A=\{0, a-2b, b-2a\}$, then without the parameter the collision $a-2b=b-2a$ would yield $3a=3b$ and hence $a=b$, which may be inaccurate. With $\beta=3$, however, $A-A=\{0,a+b,b+a\}=\{0,a+b\}$, so the redundant collision never arises. The parameter also affects Step~4, as it alters the computed sizes of $A+A$ and $A-A$, thereby ensuring no MSTD configuration is overlooked. For instance, if $A+A=\{0,a+b,2a+2b\}$ and $A-A=\{0,a-2b,b-2a\}$, the original program would fail to classify this configuration as MSTD. However, with $\beta=3$, $A+A$ remains unchanged while $|A-A|=2$, and the configuration is correctly identified as a multivariable MSTD configuration. If the program returns no MSTD sets under the parameter $\beta =p$, then we can conclude that no field of characteristic $p$ admits an MSTD set in the form of the inputted configuration. This becomes especially important for fields of $p^r$ elements with $r>1$. In a multivariable configuration, continuously forcing additional difference collisions eventually restricts $A$ to a subset of a field additively isomorphic to $\mathbb{Z}/p\mathbb{Z}$. However, if the variables are linearly independent over $\Z/p\Z$, then no further difference collisions can be forced, and any MSTD configuration remains MSTD.

\subsection*{Program Limitations}

It is important to note that the program cannot accurately track MSTD sets in abelian groups that are not fields. The program repeatedly dilates a configuration by a scalar coefficient to eliminate a variable, which preserves $|A+A|$ and $|A-A|$ only when multiplication by that coefficient is bijective. This fails in groups with nontrivial $k-$torsion, as the endomorphism $a \ra k \cdot a$ need not be injective. Additionally, reducing a linear relation by the GCD (call it $g$) of its coefficients is also invalid in groups where the group has non-trivial $g-$torsion. However, in fields, this issue only occurs for finitely many prime characteristics, whereas in arbitrary groups, $g$ may fail to be invertible for infinitely many moduli.

We now describe the computational limits of our program. Like Hegarty's program, the primary determinant in our program's total runtime is the number of free variables in the input configuration, as each additional variable creates an additional level, leading to exponentially more collision branches the program has to check through. The size of the input configuration is also important, as a configuration with a higher cardinality contains many more possible collisions at a given level. 

In practice, if no $\beta$ is input, configurations of size seven or less with five or fewer free variables tend to complete in a few seconds. Configurations of size eight with five free variables complete in a few minutes, and configurations of size nine with five free variables tend to take under an hour to complete. If a configuration has six free variables, then the program takes around two minutes to complete if it has size seven, around an hour to complete if it has size eight, and nearly a day to complete if it has size nine. The program does not complete in a feasible amount of time if we input a configuration with seven or more variables.

Although the program run-times seem feasible when six free variables are inputted, it is important to note that the program outputs a $\phi$, and that we need to individually check through each prime characteristic $p$ satisfying $3 \le p \le \phi$ by using the $\beta$ parameter. Larger configurations with more free variables tend to produce larger $\phi$, which greatly increases the total time needed to fully check through a given configuration. For this reason, we attempt to avoid inputting configurations with six free variables wherever possible, as checking through each $\beta$ case can lead to a configuration taking week(s) to fully check through. An important note is that inputting a small $\beta$ into the program significantly reduces the program's run-time, since the calculated sum and difference sets are much smaller in size at each step. For instance, inputting a size nine configuration with eight free variables and $\beta =3$ takes only a few seconds to complete. However, as $\beta$ increases, the number of potential difference collisions also increases, and the program's run-time begins to approach the run-time when no $\beta$ is input. 

\subsection*{Results for Configurations of Sizes Six and Seven}
Throughout the section, we refer the reader to the appendix to see detailed program outputs for each instance where the program was called.

First, we note that no subset $A$ of a field of characteristic 2 is MSTD, as then $A=-A$ and $A+A=A-A$ \cite{PenmanWells2014}. We now provide a lower bound on the cardinality of MSTD sets in arbitrary fields by proving Theorem \ref{Size6Field}.

\begin{proof}

Chu provided a computer-free proof that no MSTD set of size $6$ exists in the integers \cite{Chu2020}, but his argument relies on the ordering of $\Z$ and therefore does not extend to arbitrary fields.

Assume $A$ is a set of size 6 in an arbitrary field. Inputting the base configuration $A = \{0, a, b, c, d,e\}$, the program returns no MSTD configurations and $\phi = 13$. The program also returns no MSTD configurations when we input $\beta = 3,5,7,11,13$. Hence, $A$ is not MSTD in any field.
\end{proof}

We now examine MSTD sets of size 7 in arbitrary fields, and use the program to prove Theorem \ref{Size 7}.

\begin{proof}
    
Inputting the base configuration $A= \{0,a,b,c,d,e,f\}$, the program returns $\phi = 31$ and the two configurations given in the theorem statement when char$(F) \in \{17,19\}$. Let $X$ denote the set of primes up to 31, excluding 2. Inputting the base configuration for $\beta \in X\setminus \{7,17,19\}$, the program returns no MSTD configurations. For $\beta =7$, the program returned the unique (up to affine transformation) multivariable configuration given in the theorem. For $\beta =17,19$, the program returned the MSTD configurations given in the theorem.
\end{proof}

The multivariable configuration establishes the existence of an MSTD set of size $7$ in every field of characteristic 7, except for the field of 7 elements.

\subsection*{Results for Configurations of Size Eight}
We now examine MSTD sets of size 8, and prove Theorem \ref{Size8}.

\begin{proof}

To avoid inputting any configurations with six free variables, we show that one of three cases must occur, which allows us to input configurations of five variables. Our proof follows this outline: \begin{enumerate}
    \item We assume in Case 1 that a multiplicity-3 sum of the form $a+b=c+d=e$ occurs, and we input the resulting configuration into the program. We track each multivariable configuration found, as well as the largest $p$ for which $p$ can contain an MSTD set in the form of the input configuration in $\Z/p\Z$.
    \item For Case 2, we adopt one of Hegarty's arguments to show that if no multiplicity-3 sum of the form $a+b=c+d=e$ occurs and if no multiplicity-3 difference occurs, we reach a contradiction. 
    \item We assume in Case 3 that our field is not of characteristic 3 or 5, and that a multiplicity-3 difference occurs. We consider each of the three types of multiplicity-3 differences and input each resulting configuration into the program. We track each multivariable configuration found, as well as the largest possible $p$ for which $p$ can contain an MSTD set in the form of the input configuration in $\Z/p\Z$.
    \item We conduct a separate computer search to find all possible MSTD sets of size 8 in $\Z/p\Z$.
    \item We input the base configuration $A= \{0,a,b,c,d,e,f,g\}$ with $\beta =3,5$ to find all MSTD sets of size 8 in fields of characteristic 3 and 5.
\end{enumerate}

For MSTD sets of size 8, since Conway's set has a maximum of 14 and a minimum of 0, if char$(F)\ge 29$, then Conway's set is MSTD in $F$. Now, let $\gamma$ denote the maximal prime characteristic $p$ for which the original program finds an MSTD configuration that is not affinely equivalent to an MSTD set in $\Z$, and let $X_{p}$ denote the set of primes up to and including $p$, but excluding 2,3, and 5. 

We now consider the cases given in the outline. Note that we assume for now that char$(F)\ne 3,5$.

\textbf{Case 1:} A multiplicity-$3$ sum containing at least 6 distinct elements occurs. Then WLOG, we must have $a+b=c+d=e$. Inputting $A=\{0,a,e-a,c,e-c,e,f,g\}$, the program returns $\phi = 31$, $\gamma=31$, and no multivariable configurations. A check with $\beta \in X_{31}$ also returns no multivariable configurations. 

\textbf{Case 2:} No multiplicity-$3$ sum containing at least 6 distinct elements occurs, and no multiplicity-3 difference occurs. We now adopt one of Hegarty's ideas to further restrict the structure of $A$ if $A$ is MSTD \cite{Hegarty2007}. Let $\delta$ denote the number of nonzero differences in $A-A$, and let $\sigma = |A+A|$. First, $\delta$ must be even since $F$ cannot have characteristic 2. Second, we know $\delta+ 1<\sigma \le 36$, so $\delta \le 34$. We also know that there are a total of $8^2-8=56$ nonzero difference pairs. Therefore, we have at least 22 difference collisions. 

Any sum collision not belonging to a multiplicity-3 sum eliminates one sum and corresponds to at most four difference collisions. If a multiplicity-3 sum occurs, then WLOG, it must be of the form $a+b=c+d=2(0)$. This eliminates two sums and corresponds to eight difference collisions, since there are two $C_2$ sum collisions. Although one of its eliminated sums may appear to correspond to six difference collisions, the other then corresponds to only two. Consequently, at least one sum is eliminated for every four difference collisions, so $\sigma \le 36-0.25D$, where $D$ is the number of difference collisions. Since $D \ge 22$, we have $\sigma \le 36-0.25(22)=30.5$, so $\delta \le 28$. Hence, at least 28 difference collisions occur, so $\sigma \le 36-0.25(28)=29$. Therefore, $\delta \le 26$, so by pigeonhole principle, a multiplicity-$3$ difference must occur, and so this case is impossible.

\textbf{Case 3:} A multiplicity-3 difference occurs. Since we assume char$(F)\ne 3$, one of the following three equalities must occur, WLOG: \begin{enumerate}
    \item (3-chain) $a-b=b-c=c-0$. Inputting the configuration $A=\{0,3c,2c,c,d,e,f,g\}$, the program returns no multivariable configurations, Conway's set, $\phi = 47$, and $\gamma =31$. When we check $\beta \in X_{47}$, we get no multivariable configurations. 
    \item (2-chain, 1 loose difference) $a-b=b-c=d-0$. Inputting the configuration $A=\{0,c+2d,c+d,c,d,e,f,g\}$, the program returns no multivariable configurations, Conway's set, $\phi =41$, and $\gamma =31$. When we check $\beta \in X_{41}$, we get no multivariable configurations.
    \item (3 loose differences) $a-b=c-d=e$. Inputting $A=\{0,a,a-e,c,c-e,e,f,g\}$, the program returns no multivariable configurations, Conway's set, $\phi = 31$, and $\gamma =31$. When we check $\beta \in X_{31}$, we get no multivariable configurations.
\end{enumerate} 

Next, we observe that across all cases, $\gamma$ did not exceed 31. The lack of multivariable MSTD configurations allows us to computationally check $\Z/p\Z$ to find all MSTD sets up to affine transformation in fields with characteristic $p$ such that $7 \le p \le 31$. This computational check is done by directly testing if every eight-element subset of $\Z/p\Z$ is MSTD in $\Z/p\Z$ (note we can fix 0 in each subset). Detailed program outputs for these computational checks are available in the appendix. Every MSTD set found is given in the theorem. 

If we run the program with $\beta =3$ and the base configuration $A= \{0,a,b,c,d,e,f,g\}$, the program returns the  multivariable configuration given in the theorem. This configuration is MSTD in all fields of characteristic 3 as long as $a,b$ and $c$ are linearly independent elements over $\Z/3\Z$. Therefore every field of characteristic 3 contains an MSTD set of size 8 except the fields of 3 and 9 elements. Next, when we run the program with $\beta =5$ and the base configuration, the program returns the multivariable configuration given in the theorem.

\end{proof}

\subsection*{Results for Configurations of Size Nine}

We now examine MSTD sets of size $9$. Hegarty found 9 MSTD sets in $\Z$ when assuming a multiplicity-4 sum appears \cite{Hegarty2007}. Penman and Wells later verified that these are the only MSTD configurations of size 9 in $\Z$, up to affine transformation. \cite{PenmanWells2013}. These 9 sets are given here: \begin{enumerate}
\begin{multicols}{2}
    \item $\{0,1,2,4,5,9,12,13,14\}$
    \item $\{0, 1, 2, 4, 7, 8, 12, 14, 15\}$
    \item $\{0, 2, 3, 4, 7, 9, 13, 14, 16\}$
    \item $\{0, 2, 3, 4, 7, 11, 12, 14, 16\}$
    \item $\{0, 2, 3, 4, 7, 11, 15, 16, 18\}$
    \item $\{0, 2, 4, 8, 9, 10, 15, 17, 19\}$
    \item $\{0, 4, 6, 7, 8, 14, 15, 17, 21\}$
    \item $\{0, 4, 6, 8, 11, 14, 19, 21, 25\}$
    \item $\{0, 5, 6, 9, 10, 13, 16, 17, 22\}$
\end{multicols}
\end{enumerate}

We now prove Theorem \ref{Size9}.

\begin{proof} 

Let $A$ be a MSTD set of size 9. To reduce the computational complexity, we show that one of five cases must occur. Our proof follows this outline: \begin{enumerate}
    \item We assume in Case 1 that a multiplicity-3 sum of the form $a+b=c+d=e$ occurs, and we input the resulting configuration into the program. We track each multivariable configuration found, as well as the largest possible $p$ for which $p$ can contain an MSTD set in the form of the input configuration in $\Z/p\Z$.
    \item We assume in Case 2 that a multiplicity-3 sum of the form $a+b=c+d=2(0)$ occurs, and that one of its corresponding $C_2$ difference collisions is in a multiplicity-3 difference. We input the resulting configuration into the program, and we track each multivariable configuration found, as well as the largest possible $p$ for which $p$ can contain an MSTD set in the form of the input configuration in $\Z/p\Z$.
    \item We assume in Case 3 that neither Case 1 nor Case 2 occur, that no multiplicity-4 difference occurs, and that at least sixteen multiplicity-3 differences comprised only of $C_4$ collisions occur. We input the resulting configuration into the program, and we track each multivariable configuration found, as well as the largest possible $p$ for which $p$ can contain an MSTD set in the form of the input configuration in $\Z/p\Z$.
    \item For Case 4, we adopt Hegarty's argument to show that if neither Case 1, Case 2, Case 3 nor a multiplicity-4 difference occurs, then we reach a contradiction.
    \item We assume in Case 5 that a multiplicity-4 difference occurs and that the characteristic is not 3. For each type of multiplicity-4 difference, we input the resulting configuration into the program, and we track each multivariable configuration found, as well as the largest possible $p$ for which $p$ can contain an MSTD set in the form of the input configuration in $\Z/p\Z$.
    \item We conduct a separate computer search to find all possible MSTD sets of size 9 in $\Z/p\Z$.
    \item We input the base configuration $A=\{0,a,b,c,d,e,f,g,h\}$ for $\beta =3$ to find all MSTD sets of size 9 in fields of characteristic 3.
\end{enumerate}

Let $X_p$ denote the set of all primes up to and including $p$, but excluding 2 and 3. We now consider the cases given in the outline.

\textbf{Case 1:} A multiplicity-3 sum containing at least six distinct elements occurs. Then WLOG, we must have  $a+b=c+d=e$. Inputting $A=\{0,a,e-a,c, e-c,e,f,g,h\}$, the program returns $\phi =73$, $\gamma =47$, all nine integer sets, and no multivariable configurations. A check with $\beta \in X_{73} \setminus\{5,7,11\}$ returns no multivariable configurations. When $\beta =5,7,11$, we get exactly the multivariable configurations given in Theorem \ref{Size9}. It ended up being simpler to input a size 9 configuration with six free variables as opposed to inputting all possible subcases when a multiplicity-3 sum of this form occurs. Unfortunately, this resulted in a cumulative run-time of 175.3 hours for this case, which is more than a week.

\textbf{Case 2:} A multiplicity-3 sum containing exactly five distinct elements occurs, and one of its corresponding $C_2$ difference collisions is in a multiplicity-3 difference. WLOG, our multiplicity-3 sum is $a+b=c+d=0$, and WLOG, we say that $a=-b$ is in a multiplicity-3 difference. Then WLOG, there are four possibilities to consider, since $a,b$ are interchangeable variable labels, since $c,d$ are interchangeable, and since $e,f,g,h$ are interchangeable. \begin{enumerate}
    \item $a=-b=c-d$. Then $a=2c$, $b=-2c$, so we input $A=\{0,2c,-2c,c,-c,e,f,g,h\}$. The program returns $\phi =61$, $\gamma =41$, no multivariable configurations, and no integer MSTD sets. Checking through $\beta \in X_{61}$, we get no multivariable configurations. 
    \item $a=-b=c-e$. Inputting $A=\{0,c-e, e-c,c,-c,e,f,g,h\}$, the program returns $\phi = 61$, $\gamma = 43$, no multivariable configurations, and seven of the nine integer MSTD sets. Checking through $\beta \in X_{61} \setminus \{5\}$, we get no multivariable configurations. When $\beta =5$, we get four of the five multivariable configurations given in the theorem.
    \item $a=-b=c-a$. Then $c=2a$ and $d=-2a$, so we input the configuration $A=\{0,a,-a,2a,-2a,e,f,g,h\}$. However, this is essentially the same configuration as the case where $a=-b=c-d$, so we can skip this case. 
    \item $a=-b=e-f$. Then we input the configuration $A=\{0,e-f,f-e,c,-c,e,f,g,h\}$, and the program returns $\phi =67$, $\gamma =41$, no multivariable configurations, and three of the nine integer MSTD sets. Checking through $\beta \in X_{67} \setminus \{5\}$, we get no multivariable configurations. When $\beta =5$, we get three of the five multivariable configurations given in the theorem.
\end{enumerate}

The interchangeability of variables allows us to cut down the number of cases we have to check. For instance, we need not check $a=-b=e-c$ since $a$ and $b$ are interchangeable, so the case $a=-b=c-e$ covers the case $b=-a=c-e$, which gives $a=-b=e-c$. 

\textbf{Case 3:} Neither Case 1 nor Case 2 occurs, sixteen multiplicity-3 differences comprised of three loose differences occur, and no multiplicity-4 difference occurs. Each of these sixteen differences is comprised of six distinct elements of $A$. Hence, there are 96 `slots' for elements among the 16 differences, so by pigeonhole principle, some element $a \in A$ appears in at least twelve of these sixteen differences (since it is impossible for $a$ to appear in odd many multiplicity-3 differences). WLOG, let $a-b=c-d=e-0$ be one of these sixteen multiplicity-3 differences. Then $a+d=b+c$, $a=b+e$, and $c=d+e$. If $a-c=b-d$ is in another one of these sixteen differences, then we must have $a-c=b-d=f-g$, WLOG. This is because if $0$ or $e$ are in the third loose difference, we either get a multiplicity-4 difference or a multiplicity-3 sum of at least six distinct elements. Inputting the configuration $A=\{0,b+e,b,d+e,d,e,b-d+g,g,h\}$, the program returns three of the nine integer MSTD sets, no multivariable configurations, $\phi =47$ and $\gamma =37$. Checking $\beta \in X_{47} \setminus \{5\}$, we get no multivariable configurations. When $\beta =5$, we get the five multivariable configurations given in the theorem. 

We have now fully checked the subcase where $a-c=b-d$ is a multiplicity-$3$ difference of three loose differences. The cases where $a-e=b$ and $c-e=d$ are multiplicity-$3$ differences of three loose differences yield identical results. Therefore, the only remaining case to consider is when neither $\pm(a-c)$ nor $\pm (a-e)$ are multiplicity-3 differences of three loose differences. Since $a$ appears in twelve of the sixteen multiplicity-3 differences, $a-d$ must be a multiplicity-3 difference containing three loose differences. If none of $f,g,h$ occur in one of the other two loose differences, then we get a multiplicity-4 difference or a multiplicity-3 sum of at least six distinct elements. Therefore, since $f,g,h$ are interchangeable at this stage, we may assume $f$ appears in a loose difference. The following list gives every valid case that does not result in a multiplicity-4 difference, a multiplicity-3 sum of at least six distinct elements, or a previously explored case, WLOG: \begin{enumerate}
    \item $a-d=f-e$. Inputting the configuration $A=\{0,b+e,b,d+e,d,e,b+2e-d,g,h\}$, the program returns $\phi  =67$, $\gamma =47$, no multivariable configurations, and eight of the nine integer MSTD sets. When we check $\beta \in X \setminus \{5,7,11\}$, we get no multivariable configurations. For $\beta =5,7,11$, we get exactly the MSTD configurations given in the theorem. 
    \item $a-d=f-b$. Inputting the configuration $A=\{0,b+e,b,d+e,d,e,2b+e-d,g,h\}$, the program returns $\phi =61$, $\gamma =43$, no multivariable configurations, and eight of the nine integer sets. When we check $\beta \in X \setminus \{5,11\}$, we get no multivariable configurations. For $\beta =5,11$, we get exactly the MSTD configurations given in the theorem.
    \item $a-d=f-g$. Inputting the configuration $A=\{0,b+e,b,d+e,d,e,b+e-d+g,g,h\}$, the program returns $\phi =61$, $\gamma = 47$, no multivariable configurations, and eight of the nine integer sets. When we check $\beta \in X \setminus \{5,11\}$, we get no multivariable configurations. For $\beta =5,11$, we get exactly the MSTD configurations given in the theorem. 
    \item $a-d=c-f$. Then $b+e-d=d+e-f$, so $f=2d-b$. Inputting the configuration $\{0,b+e, b,d+e,d,e,2d-b,g,h\}$, we get $\phi = 61$, $\gamma = 43$, no multivariable configurations, and eight of the nine integer sets. When we check $\beta \in X \setminus \{5,11\}$, we get no multivariable configurations. For $\beta =5,11$, we get exactly the MSTD configurations given in the theorem.
    \item $a-d =0-f$. Then $f=d-b-e$. Inputting the configuration $\{0,b+e, b,d+e,d,e,d-b-e,g,h\}$, we get $\phi =67$, $\gamma = 47$, no multivariable configurations, and eight of the nine integer sets. When we check $\beta \in X \setminus \{5,7,11\}$, we get no multivariable configurations. For $\beta =5,7,11$, we get exactly the MSTD configurations given in the theorem.
\end{enumerate}

\textbf{Case 4:} Neither Case 1, Case 2, Case 3, nor a multiplicity-4 difference occurs. We have 72 nonzero difference pairs, and $\sigma =|A+A| \le 45$. Therefore, $\delta \le 42$, so $\sigma \le 45-0.25(30)=37.5$. Hence, $\delta \le 34$, so at least $4\binom{3}{2}+30\binom{2}{2}=42$ difference collisions occur. Therefore, $\sigma \le 45-0.25(42)=34.5$, so $\delta \le 32$. Hence, at least $8\binom{3}{2}+24\binom{2}{2}=48$ difference collisions occur. Therefore, $\sigma \le 45-48(0.25)=33$, so $\delta \le 30$. Hence, at least $12\binom{3}{2}+18\binom{2}{2}=54$ difference collisions occurs so $\sigma \le 45-54(0.25)=31.5$, so $\delta \le 28$. Then we have at least $16\binom{3}{2}+12\binom{2}{2}=60$ difference collisions, in the case where there are sixteen multiplicity-3 differences and twelve differences of exact multiplicity-2. However, since Case 3 does not occur, at least one of the sixteen multiplicity-three differences (and its negative) must contain a $C_2$ collision. Since Case 2 does not occur, the corresponding $C_2$ sum collision belongs to an exact multiplicity-2 sum. Therefore, $\sigma \le 44-0.25(58)=29.5$, so $\delta \le 26$. As such, at least $20\binom{3}{2}+6\binom{2}{2}=66$ difference collisions occur, and because Case 3 does not occur, at least six of these twenty multiplicity-3 differences must contain a $C_2$ collision, and because Case 2 does not occur, each of the three corresponding $C_2$ sum collisions have exact multiplicity-2. Therefore, $\sigma \le 42-60(0.25)=27$, so $\delta \le 24$. Hence, every difference is multiplicity-3, so 72 difference collisions occur, and at least 10 of them must contain a $C_2$ collision, so because Case 2 and Case 3 do not occur, $\sigma \le 40-0.25(62)=24.5$. Therefore, $\delta \le 22$, so by pigeonhole principle, there exists at least one multiplicity-4 difference, so this case is impossible.

\textbf{Case 5:} A multiplicity-4 difference occurs. This is our final case to consider, and if we assume char$(F)\ne3$, there are five types to consider: \begin{enumerate}
    \item (4-chain) $a-b=b-c=c-d=d$. Inputting the configuration $A=\{0, 4d,3d,2d,d,e,f,g,h\}$, the program returns $\phi = 61$, $\gamma =41$, no multivariable configurations, and no integer sets. A check with $\beta \in X_{61}$ confirms no multivariable configuration exists. 
    \item (3-chain, 1 loose difference) $a-b=b-c=c-d=e$. Inputting the configuration $A=\{0, d+3e,d+2e,d+e,d,e,f,g,h\}$, the program returns $\phi = 53$, $\gamma =43$, no multivariable configurations, and four of the nine integer MSTD sets. A check with $\beta \in X_{53}\setminus \{5, 11\}$ confirms no multivariable configuration exists. When $\beta = 11$ we get the multivariable configuration given in Theorem \ref{Size9}. When $\beta =5$, we get three of the multivariable configurations given in Theorem \ref{Size9}.
    \item (two 2-chains) $a-b=b-c=d-e=e-0$. Inputting the configuration $A= \{0, c+2e,c+e,c,2e,e,f,g,h\}$, we get $\phi =53$, $\gamma =37$, no multivariable configurations, and five of the nine integer MSTD sets. A check with $\beta \in X_{53}\setminus \{5, 11\}$ confirms no multivariable configuration exists. When $\beta = 11$ we get the multivariable configuration given in Theorem \ref{Size9}. When $\beta =5$, we get two of the multivariable configurations given in Theorem \ref{Size9}.
    \item (2-chain, 2 loose differences) $a-b=b-c=d-e=f$. Inputting the configuration $A=\{0, c+2f,c+f,c,e+f,e,f,g,h\}$, the program returns $\phi = 47$, $\gamma = 37$, no multivariable configurations, and five of the nine integer MSTD sets. A check with $\beta \in X_{47}\setminus \{5, 11\}$ confirms no multivariable configuration exists. When $\beta = 11$ we get the multivariable configuration given in Theorem \ref{Size9}. When $\beta =5$, we get the five multivariable configurations given in Theorem \ref{Size9}.
    \item (4 loose differences) $a-b=c-d=e-f=g$. Inputting the configuration $A= \{0, b+g, b, d+g,d, f+g,f,g,h\}$, the program returns $\phi = 41$, $\gamma = 37$, no multivariable configurations, and no integer MSTD sets. A check with $\beta \in X_{41}\setminus \{5, 11\}$ confirms no multivariable configuration exists. When $\beta = 11$ we get the multivariable configuration given in Theorem \ref{Size9}. When $\beta =5$, we get the five multivariable configurations given in Theorem \ref{Size9}.
\end{enumerate}

Across all five cases, $\gamma$ did not exceed 47. Given that our search gave us every possible multivariable MSTD configuration of size 9, we only need to check $\Z/p\Z$ for $5 \le p \le 47$ to find all remaining MSTD configurations that are not affine transformations of one of the integer sets in sufficiently large $p$. This computational check is done by directly testing if every nine-element subset of $\Z/p\Z$ is MSTD in $\Z/p\Z$ (note we can fix 0 in each subset). Program outputs for these computational checks are available in the appendix. In total, we found 19 unique MSTD configurations in $\Z/{23}\Z$, 58 in $\Z/{29}\Z$, 33 in $\Z/31\Z$, 8 in $\Z/37\Z$, 6 in $\Z/{41}\Z$, 2 in $\Z/{43}\Z$, and 1 in $\Z/{47}\Z$. Each of these sets is listed in an appendix. 

We have now found all MSTD sets of size 9 in arbitrary fields when char$(F)>3$. If char$(F)=3$, we input the base configuration with $\beta =3$, and get the multivariable configurations given in the theorem.

\end{proof}



\section{Multiplicative Subgroups as MSTD Sets}

When counting MSTD sets in $\mathbb{Z}/p\mathbb{Z}$ for small $p$, we observed that the total number was always divisible by $p(p-1)$. In the initial cases we examined, this occurred because every MSTD set $A \subset \mathbb{Z}/p\mathbb{Z}$ satisfied $m_1\cdot A+d_1 \neq m_2\cdot A+d_2$ for all $m_1, m_2 \in \Z/p\Z^*$ and $d_1,d_2 \in \Z/p\Z$ such that $(m_1,d_1) \ne (m_2,d_2)$. This naturally raises the question of whether the same holds for all MSTD sets $A \subset \mathbb{Z}/p\mathbb{Z}$ as $p$ grows arbitrarily large.

\begin{definition}
    An \textit{$m-$invariant} set $B \subset \Z/p\Z$ satisfies $m\cdot B=B$, where $m \in \Z/p\Z^*$ and $m \ne 1$.
\end{definition}

We recall a standard result in group theory that holds for any proper subset $A \subset \Z/p\Z$.

\begin{lemma}\label{SubgroupLmeema}
Let $A$ be a proper subset of $\Z/p\Z$. Then there exists $m_1, m_2 \in \Z/p\Z^*$ and $d_1,d_2 \in \Z/p\Z$ such that $m_1\cdot A+d_1=m_2\cdot A+d_2$ and $(m_1,d_1) \ne (m_2,d_2)$ if and only if there exists an affine transformation of $A$ which is $m$-invariant for some $m \in \Z/p\Z^*$.
\end{lemma}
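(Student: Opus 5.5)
The plan is to unwind both directions directly, using only invertibility of nonzero elements of $\Z/p\Z$ and, for the one delicate point, Lemma \ref{StabSUBGROUP}. I assume $A$ is nonempty; for $A=\emptyset$ both sides hold trivially, since $m\cdot\emptyset=\emptyset$ for every $m$.

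\emph{($\Leftarrow$)} Suppose $B=c\cdot A+y$ with $c\in\Z/p\Z^*$, and $m\cdot B=B$ for some $m\in\Z/p\Z^*$ with $m\neq 1$. Expanding gives
\[
(mc)\cdot A+my=c\cdot A+y .
\]
Take $(m_1,d_1)=(mc,my)$ and $(m_2,d_2)=(c,y)$. These pairs differ because $mc\neq c$: indeed $c\neq 0$ and $m\neq 1$. This gives the required equality.

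\emph{($\Rightarrow$)} Suppose $m_1\cdot A+d_1=m_2\cdot A+d_2$ with $(m_1,d_1)\neq(m_2,d_2)$. Multiply by $m_1^{-1}$ and rearrange to get
\[
A=m\cdot A+d,\qquad m=m_1^{-1}m_2,\quad d=m_1^{-1}(d_2-d_1).
\]

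The main step is to rule out $m=1$. If $m=1$, then $d\neq 0$, since otherwise $(m_1,d_1)=(m_2,d_2)$. So $d$ is a nonzero element of the stabilizer $S(A)$. By Lemma \ref{StabSUBGROUP}, $|S(A)|$ divides both $p$ and $|A|$. Since $1\le|A|<p$ (here we use that $A$ is proper and nonempty), this forces $|S(A)|=1$, which contradicts $d\in S(A)$. Hence $m\neq 1$.

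Now translate so that the affine relation becomes a pure dilation. Set $B=A+x$, where $x$ is to be chosen. Then
\[
m\cdot B=m\cdot A+mx=(A-d)+mx=B+\bigl((m-1)x-d\bigr).
\]
Since $m-1\neq 0$ is invertible, choose $x=(m-1)^{-1}d$. This gives $m\cdot B=B$, so $B$ is an $m$-invariant affine transformation of $A$ (with dilation factor $1$ and shift $x$).

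The only real obstacle is the case $m_1=m_2$. The definition of $m$-invariance demands $m\neq 1$, so this case must be excluded. That is exactly where the primality of $p$ enters, through the divisibility statement in Lemma \ref{StabSUBGROUP}. The rest is routine algebra.
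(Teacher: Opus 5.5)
Your proof is correct and follows the paper's argument essentially step for step: you normalize to $A=m\cdot A+d$ with $m=m_1^{-1}m_2$ and $d=m_1^{-1}(d_2-d_1)$, use Lemma~\ref{StabSUBGROUP} to rule out $m=1$, and translate by $(m-1)^{-1}d=-(1-m)^{-1}d$, which gives exactly the paper's set $B=A-(1-m)^{-1}d$. One small quibble concerns your aside on $A=\emptyset$: it needs some $m\neq 1$ in $\Z/p\Z^*$, and no such $m$ exists when $p=2$, so this degenerate case should simply be excluded (the paper's proof also tacitly assumes $A\neq\emptyset$).
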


\begin{proof}
    The backwards direction is trivial. For the forwards direction, let $m_1A+d_1=m_2A+d_2$. Then if $m= m_1^{-1}m_2$ and $d=m_1^{-1}(d_2-d_1)$, we get that $A= mA+d$. If $m=1$ and $d \ne 0$, then $d \in S(A)$, so $|S(A)|>1$. From Lemma \ref{StabSUBGROUP}, we have that $S(A) = \Z/p\Z$, so $A= \Z/p\Z$, which is a contradiction since $A$ is a proper subset of $\Z/p\Z$. If $m=1$ and $d=0$, then our initial collision is trivial. If $m \ne 1$, then the set $B= A-(1-m)^{-1}d$ satisfies $m\cdot B=B$.
\end{proof}

We now examine whether there exists an $m-$invariant MSTD set. 

\begin{corollary}
    The smallest MSTD set $A$ in $\Z/p\Z$ for which there exists nontrivial $m$ such that $A$ is $m-$invariant has cardinality 10.
\end{corollary}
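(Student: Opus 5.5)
We split the statement into a lower bound (no $m$-invariant MSTD set of size at most $9$ in any $\Z/p\Z$) and an upper bound (an explicit $m$-invariant MSTD set of size $10$ in some $\Z/p\Z$). We assume $m\cdot A=A$ with $m\neq 1$, without first shifting, since Lemma~\ref{SubgroupLmeema} lets us pass to an $m$-invariant affine image.

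\textbf{Structure of $m$-invariant sets.} Let $k$ be the order of $m$ in $(\Z/p\Z)^*$, so $k\ge 2$. Multiplication by $m$ permutes $A$, and its orbits are $\{0\}$ (if $0\in A$) and cosets $x\langle m\rangle$, each of size exactly $k$. Hence $|A|\equiv 0$ or $1 \pmod k$, and $A$ is a union of cosets of $\langle m\rangle$, possibly together with $0$. If $k=2$, then $m=-1$ and $A=-A$ is symmetric with respect to $0$, so $A$ is sum-difference balanced. So we may assume $k\ge 3$, $k\mid p-1$, and $|A|\in\{k\ell,\,k\ell+1\}$.

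\textbf{Lower bound.} Theorems~\ref{Size5Thm} and \ref{Size6Field} rule out sizes $5$ and $6$. Sizes at most $4$ are excluded by a direct check, or by noting that Hegarty's integer bound extends trivially there; if the paper has not covered this, a short collision count suffices. For sizes $7,8,9$ we rely on the complete classifications in Theorems~\ref{Size 7}, \ref{Size8} and \ref{Size9} for $\Z/p\Z$ (only one-variable configurations live in $\Z/p\Z$). The task is then to show that none of the listed sets has an affine image invariant under some $m\neq 1$. Equivalently, we need $xA+y=A$ with $x\neq 1$; for prime $p$ this is the same as $A$ having a nontrivial affine symmetry, which must have a fixed point by Lemma~\ref{SubgroupLmeema}.

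This is a finite check. For each listed set and its prime $p$, we enumerate $x\in(\Z/p\Z)^*\setminus\{1\}$ and test whether $xA$ is a translate of $A$. For the integer-type sets at large $p$, such as Conway's set or the nine integer sets, we argue once for all $p$ large enough. An affine symmetry of $A\subset[0,M]$ with $p>2M$ forces $x=\pm1$ on the integer image, and $x=-1$ would make $A$ symmetric, contradicting MSTD. The congruence constraints above also prune the work. For example, size $7$ forces $k\in\{3,6,7\}$ with $k\mid p-1$, and in characteristic $17$ this leaves only limited options. The output of this step is that no size-$7,8,9$ MSTD set in $\Z/p\Z$ is $m$-invariant.

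\textbf{Upper bound.} We exhibit a size-$10$ example, which most naturally takes the form $\{0\}\cup(\text{three cosets of an order-}3\text{ subgroup})$ or a union of two cosets of an order-$5$ subgroup. To find it, we search over small $p\equiv 1\pmod 3$ or $\pmod 5$, build all unions of cosets of the relevant subgroup (with or without $0$), and compute $|A+A|$ and $|A-A|$ directly. We then record one explicit instance of $p$, $m$ and $A$.

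\textbf{Main obstacle.} The hardest part is making the lower bound rigorous for all primes in the size $7$--$9$ range, especially for the large-$p$ integer-type sets, which requires the $p>2M$ argument, and for the many sporadic sets found at $p=23,29,31$. We would handle the sporadic sets purely computationally, using the same subset-enumeration code as in the size-$8$ and size-$9$ proofs.
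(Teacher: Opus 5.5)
Your overall architecture matches the paper's: use the classifications for sizes $7$--$9$, check the sporadic sets by machine, and exhibit a size-$10$ example. The gap is in the step meant to handle infinitely many primes at once, which is false as stated. It is not true that an affine symmetry $t\mapsto xt+y$ of a set $A\subset[0,M]$ in $\Z/p\Z$ with $p>2M$ must have $x=\pm1$. Two counterexamples:
\begin{itemize}
\item $\{0,1,3\}\subset\Z/7\Z$ is permuted by $t\mapsto 2t+1$;
\item $\{0,7,11\}\subset\Z/31\Z$, a translate of the order-$3$ subgroup $\{1,5,25\}$, is permuted by $t\mapsto 5t+7$.
\end{itemize}
The hypothesis $p>2M$ only ensures that $|A+A|$ and $|A-A|$ take their integer values; it says nothing about multiplicative structure of $A$ modulo $p$. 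So Conway's set for $p\ge 29$ and the nine integer sets of size $9$ for $p>47$ are not actually handled.

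Your route can be repaired, but only with a larger threshold and more computation. Assume $0,M\in A$ and $p>2M^2$. For $a\in A$, let $e_a\in[-M,M]$ be the integer representative of $f(a)-f(0)$, so $e_a\equiv xa \pmod p$. Then $e_aM\equiv e_Ma\pmod p$ with $|e_aM-e_Ma|\le 2M^2<p$, so $e_aM=e_Ma$ in $\Z$. Hence the symmetry lifts to a rational affine bijection of $A$, which must have slope $\pm1$. The primes $2M<p\le 2M^2$ (up to $1250$ for the size-$9$ sets) would then need a machine check.

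The idea you are missing, which is what the paper uses, is to apply the coset decomposition to $A\pm A$ and not only to $A$. Since $m(A\pm A)=A\pm A$, both $A+A$ and $A-A$ are unions of cosets of $\langle m\rangle$, possibly together with $0$. If the order $k$ of $m$ is even, then $-1\in\langle m\rangle$ and $A=-A$. You only excluded $k=2$, but every even $k$ is excluded this way. For odd $k$, $(A-A)\setminus\{0\}$ is a union of pairs of distinct cosets $\pm c\langle m\rangle$, so $|A-A|\equiv1\pmod{2k}$, while $|A+A|\equiv0$ or $1\pmod k$. These are conditions on $p$-independent integers you already know:
\begin{itemize}
\item For $|A|=8$, only $A=\{0\}\cup C$ with $|C|=7$ survives, and Conway's set has $|A-A|=25\not\equiv1\pmod{14}$.
\item For $|A|=9$, one needs $0\notin A$ and $k\in\{3,9\}$, hence $|A-A|\equiv1\pmod 6$. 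The integer sets satisfying this have $|A-A|=31$ but $|A+A|=32\equiv2\pmod 3$.
\end{itemize}
Finally, your upper bound is only a search plan, whereas the statement needs an actual example. The paper gives $A=\{0\}\cup C\cup 2C\cup 10C=\{0,1,2,6,10,12,16,17,29,36\}\subset\Z/43\Z$ with $C=\{1,6,36\}$ and $A=6\cdot A$. This has exactly the shape you anticipated.
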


\begin{proof}
    First, note that if $B$ is a multiplicative subgroup of even order, then $-1 \in B$, so $B=-B$. Hence, $B+B=B-B$, so $B$ is sum-difference balanced. Next, if $C$ is a union of cosets of a multiplicative subgroup, then $C+C$ and $C-C$ are also unions of cosets of the same multiplicative subgroup (including potentially 0). Therefore, if $A$ is a union of cosets of a subgroup of odd order $n$, then $|A-A| \equiv 1 \pmod {2n}$ since $0 \in A-A$ and since $A$, $-A$ are disjoint. Additionally, $A+A \equiv k \pmod {n}$ where $k \in \{0,1\}$, and $k$ depends on whether or not $0 \notin A+A$.
    
    Consider $A$ such that $|A|=7$. We can computationally check if $A=m\cdot A+d$ for any $m \in \Z/p\Z^*$, $d \in \Z/p\Z$, where $A$ is one of the MSTD configurations given that is not multivariable. This is then enough to verify whether an affine transformation of $A$ is $m-$invariant due to Lemma \ref{SubgroupLmeema}. For the two configurations in $\Z/17\Z$ and $\Z/19\Z$ in Theorem \ref{Size 7}, we computationally verify that neither is $m-$invariant. 
    

    Consider $A$ such that $|A|=8$. If $0 \notin A$, then $A$ is a union of cosets of even order and is sum-difference balanced. If $0 \in A$, then $A=C \cup \{0\}$, where $C$ is a coset of order 7. We know that $|C-C|\equiv 1 \pmod {14}$, and $A-A=C\cup -C\cup (C-C)$. Therefore, $|A-A| \equiv 1 \pmod {14}$ as well. As such, no affine transformation of Conway's set is ever $m-$invariant since if $A$ is Conway's set, then $|A-A|=25 \equiv 11 \mod 14$. We verify that none of the MSTD configurations in $\Z/p\Z$ in Theorem \ref{Size8} are $m-$invariant. 
    


   Now, consider an $m-$invariant set $A$ such that $|A|=9$. Recall that the smallest $p$ where $\Z/p\Z$ contains an MSTD set of size 9 is $p=23$. A computational check verifies that none of the sets MSTD in $\Z/p\Z$ for $23 \le p \le 47$ where $p$ is prime are $m-$invariant. It is still possible for one of the affine transformations of the nine integer sets of size 9 to be $m-$invariant in $\Z/p\Z$ for large enough $p$.

   If $|A|=9$ and $0 \in A$, then if we assume $A$ is $m-$invariant, we have that $A$ is a union of cosets of even order and $\{0\}$. Therefore, $A=-A$ and $A+A=A-A$. If $A$ is a single coset of order $9$, then $|A+A| \equiv 0 \pmod{9}$ and $|A-A| \equiv 1 \pmod{18}$. If $A$ is a union of three cosets of order 3, then $|A-A| \equiv1 \pmod{6}$, and $|A+A|\equiv 0 \pmod{3}$ or $|A+A|\equiv 1\pmod{3}$ if $0 \in A+A$. In either case, we must have $|A-A|\equiv 1\pmod{6}$. Of the nine MSTD sets of size 9 in $\Z$ only $\{0, 2, 3, 4, 7, 11, 15, 16, 18\}$, $\{0, 4, 6, 7, 8, 14, 15, 17, 21\}$, and $\{0, 5, 6, 9, 10, 13, 16, 17, 22\}$ satisfy this constraint, as they all have a difference set of cardinality 31. However, their sumsets all have cardinality 32, which is a contradiction since we have that $|A+A| \not \equiv 2 \pmod 3$. Thus, no MSTD set of size 9 satisfies $mA=A$ in $\Z/p\Z$.

    In $\Z/{43}\Z$, $C=\{1,6,36\}$ is a multiplicative subgroup of size 3, and $$A=\{0\} \cup C \cup 2\cdot C\cup 10\cdot C=\{0, 1, 2, 6, 10, 12, 16, 17, 29, 36\}$$ is MSTD in $\Z/{43}\Z$ and satisfies $A=6\cdot A$.  
\end{proof}

Although a union of cosets of a multiplicative subgroup can be MSTD at small cardinalities, one can computationally verify that for small primes $p<1000$ a multiplicative subgroup in $\Z/p\Z$ cannot be MSTD. This raises the question of whether a multiplicative subgroup can be MSTD at all. The sum-product phenomenon says that a set cannot be highly structured under both addition and multiplication at the same time ~\cite{Bourgain2004}. A multiplicative subgroup $A$ satisfies $A\cdot A = A$, so the sum-product phenomenon predicts that its sumset must be large. Indeed, using basic Fourier-analytic techniques, one can verify that a multiplicative subgroup $A$ of size at least $p^{3/4}$ in $\Z/p\Z$ satisfies $\Z/p\Z \subseteq A-A$, and therefore, $A$ is not MSTD. For smaller subgroups, however, these techniques are no longer strong enough to force $A-A$ to cover all of $\Z/p\Z$, which means a multiplicative subgroup could be MSTD. This motivates our computational search for such examples. 

We now prove Theorem \ref{SubgroupThm}.

\begin{proof}
    The first statement is easily verifiable with a direct computer search. The second statement is not as easily verifiable since we can have a multiplicative subgroup of odd order $n$ as long as $n |p-1$, and Dirichlet's Theorem on arithmetic progressions gives that there are infinitely many such $p$. As such, we need to narrow down which characteristics a sum collision can occur in. 
    
    Recall that for $f, g \in \Z/p\Z[x]$, the resultant $\operatorname{Res}(f, g)$ is congruent to zero modulo $p$ if and only if $f$ and $g$ share a common root in the algebraic closure of $\Z/p\Z$. In our setting, if $m \in \Z/p\Z$ is a generator of the subgroup $A$ of order $n$, taking the $n$th cyclotomic polynomial $\Phi_n$ and $g(x) = x^i + x^j - x^k - x^l$ gives $\operatorname{Res}(\Phi_n, g) \equiv 0 \pmod{p}$ exactly when a sum collision of the form $m^i + m^j = m^k + m^l$ occurs in $A$. We know that such a sum collision must occur because $A$ is MSTD, and every element in $A$ can be expressed as $m^i$ for some $0 \le i \le n-1$. 

    We built a program that uses these facts to reduce the problem down to a finite search. For a given odd order $n$, the program runs through every possible sum collision, each of which generates a form of the function $g$. To speed up the program, we can assume that $i=0$, since $m^i+m^j= m^k+m^l$ gives $1+m^{j-i} = m^{k-i}+m^{l-i}$, which is also a sum collision in $A$ because $A$ is $m-$invariant. We can also scale terms in the collision by $m$ to ensure we do not check the same sum collision twice. For instance, if $1+m= m^2+m^4$ is a sum collision in a subgroup of order 5, then scaling each term by $m$ gives $1+m^3=m+m^2$, which is the exact same collision up to affine transformation.
    
    The program uses Python's SymPy module to calculate the resultant of $\Phi_n(x)$ and $g$ over complex numbers (i.e., if they share a complex root, the program will return 0). Next, if the resultant is nonzero, it calculates all prime factors $p$ of the resultant that satisfy $n |(p-1)$, and then calculates whether the multiplicative subgroup of order $n$ is MSTD in each possible $\Z/p\Z$. After running the program on all odd orders $n$ up to 141, which took roughly eighty minutes, we get the second statement in the theorem. The program did not find any instances where $\Phi_n$ and $g$ shared a complex root for $n \le 141$, which allowed the program to run smoothly. Detailed program outputs are available in an appendix. 
\end{proof}

Table~\ref{Table 1} below gives more examples of MSTD multiplicative subgroups in $\Z/p\Z$. Note that $n$ denotes the order, $p$ the prime characteristic, and $\alpha$ is the value such that $p^\alpha = n$.

\begingroup
\renewcommand{\arraystretch}{1.15}
\setlength{\tabcolsep}{6pt}
\begin{longtable}{|c c c || c c c || c c c|}
\caption{Examples of MSTD multiplicative subgroups}
\label{Table 1}\\
\hline
$n$ & $p$ & $\alpha$ & $n$ & $p$ & $\alpha$ & $n$ & $p$ & $\alpha$ \\
\hline
\endfirsthead
\multicolumn{9}{c}{\tablename\ \thetable\ -- continued from previous page}\\
\hline
$n$ & $p$ & $\alpha$ & $n$ & $p$ & $\alpha$ & $n$ & $p$ & $\alpha$ \\
\hline
\endhead
\hline
\multicolumn{9}{r}{\emph{continued on next page}}\\
\endfoot
\hline
\endlastfoot
141  & 4231   & 0.593 & 1197 & 186733 & 0.584 & 2713 & 748789  & 0.584 \\ \hline
161  & 3221   & 0.629 & 1527 & 238213 & 0.592 & 2841 & 1011397 & 0.575 \\ \hline
213  & 7669   & 0.599 & 1585 & 209221 & 0.601 & 2853 & 810253  & 0.585 \\ \hline
265  & 10601  & 0.602 & 1607 & 231409 & 0.598 & 2903 & 766393  & 0.588 \\ \hline
333  & 15319  & 0.603 & 1703 & 228203 & 0.603 & 2909 & 994879  & 0.578 \\ \hline
339  & 11527  & 0.623 & 1741 & 278561 & 0.595 & 2931 & 679993  & 0.594 \\ \hline
339  & 16273  & 0.601 & 1743 & 355573 & 0.584 & 2965 & 764971  & 0.590 \\ \hline
375  & 22501  & 0.591 & 1795 & 323101 & 0.591 & 3027 & 992857  & 0.580 \\ \hline
543  & 38011  & 0.597 & 1847 & 343543 & 0.590 & 3093 & 1033063 & 0.580 \\ \hline
571  & 39971  & 0.599 & 1939 & 372289 & 0.590 & 3225 & 1161001 & 0.579 \\ \hline
579  & 35899  & 0.607 & 2007 & 437527 & 0.585 & 3323 & 1242803 & 0.578 \\ \hline
645  & 46441  & 0.602 & 2073 & 468499 & 0.585 & 3325 & 1270151 & 0.577 \\ \hline
715  & 72931  & 0.587 & 2089 & 309173 & 0.605 & 3339 & 814717  & 0.596 \\ \hline
817  & 93139  & 0.586 & 2259 & 573787 & 0.582 & 3379 & 1115071 & 0.584 \\ \hline
831  & 81439  & 0.595 & 2345 & 497141 & 0.592 & 3453 & 1111867 & 0.585 \\ \hline
899  & 70123  & 0.610 & 2361 & 679969 & 0.578 & 3603 & 1405171 & 0.579 \\ \hline
957  & 91873  & 0.601 & 2387 & 663587 & 0.580 & 3669 & 1372207 & 0.581 \\ \hline
1011 & 151651 & 0.580 & 2519 & 559219 & 0.592 & 3701 & 1043683 & 0.593 \\ \hline
1147 & 135347 & 0.596 & 2679 & 476863 & 0.604 & 3705 & 1185601 & 0.588 \\ \hline
\end{longtable}
\endgroup

Through gathering data for $\Z/p\Z$ for primes up to $8{,}000{,}000$, we conjecture that there exist infinitely many pairs $(n,p)$ such that $\mathbb{Z}/p\mathbb{Z}$ contains an MSTD multiplicative subgroup of order $n$. This is supported by the observation that MSTD multiplicative subgroups continue to appear as $p$ grows. However, the proportion of primes $p$ for which $\Z/p\Z$ contains an MSTD multiplicative subgroup decreases as $p$ gets larger, among the primes we observed. Table \ref{Table 2} gives the proportion of primes in a certain range which contain an MSTD subgroup: 
\begin{table}[ht]
\centering
\renewcommand{\arraystretch}{1.15}
\setlength{\tabcolsep}{8pt}
\caption{Density of primes admitting an MSTD multiplicative subgroup (cumulative)}
\label{Table 2}
\begin{tabular}{|c|c|c|c|}
\hline
$m$ & Cumulative primes $\le m$ & With MSTD subgroup & Proportion \\
\hline
$10^{6}$              & 78498 & 50 & $6.370\times10^{-4}$ \\ \hline
$2\cdot 10^{6}$       & 148933 & 78 & $5.237\times10^{-4}$ \\ \hline
$3\cdot 10^{6}$       & 216816 & 100 & $4.612\times10^{-4}$ \\ \hline
$4\cdot 10^{6}$       & 283146 & 112 & $3.955\times10^{-4}$ \\ \hline
$5\cdot 10^{6}$      & 348513 & 129 & $3.702\times10^{-4}$ \\ \hline
$6\cdot 10^{6}$      & 412094 & 139 & $3.373\times10^{-4}$ \\ \hline
$7\cdot 10^{6}$       & 476648 & 148 & $3.105\times10^{-4}$ \\ \hline
$8\cdot 10^{6}$      & 543153 & 158 & $2.909\times10^{-4}$ \\ \hline
\end{tabular}
\end{table}

Another question of interest is the behavior of $\alpha$ as $p$ increases. We know that $\alpha < 0.75$, and for all MSTD subgroups found in primes greater than $476{,}863$, $\alpha$ remained within $(0.56, 0.6)$. However, further analysis of this trend lies outside the scope of this paper.

\subsection*{Acknowledgments}

The author thanks Nathan Kaplan for the many invaluable comments and suggestions during
the preparation of this work. 

\subsection*{Statement on the Use of Artificial Intelligence}

Artificial intelligence tools were used in a supporting capacity during the preparation of this work. Specifically, AI assistance was used to test the accompanying code for correctness, to improve the code's computational efficiency and readability, and to help compile and format the tables presented in the appendix. 

\end{document}